\theoremstyle{plain}
\newtheorem{Theorem}{Theorem}[section]
\newtheorem{Proposition}[Theorem]{Proposition}
\newtheorem{Lemma}[Theorem]{Lemma}
\newtheorem{Remark}[Theorem]{Remark}
\numberwithin{Theorem}{section}
\numberwithin{equation}{section}
\def\square{\vbox{
\hrule height .4pt \hbox{\vrule width .4pt height 7pt \kern 7pt
\vrule width .4pt} \hrule height .4pt }}
\def\QED{\hfill {$\square$}\goodbreak \medskip}
\newcommand{\average}{{\mathchoice {\kern1ex\vcenter{\hrule height.4pt
width 6pt depth0pt} \kern-9.7pt} {\kern1ex\vcenter{\hrule
height.4pt width 4.3pt depth0pt} \kern-7pt} {} {} }}
\def\R{\mathbb{R}}
\renewcommand{\a }{\alpha }
\renewcommand{\b }{\beta }
\newcommand{\D }{\Delta }
\newcommand{\e }{\varepsilon }
\renewcommand{\l }{\lambda }
\newcommand{\n }{\nabla }
\newcommand{\vp }{\varphi }
\newcommand{\rh }{\rho }
\newcommand{\s }{\sigma }
\renewcommand{\t }{\tau }
\renewcommand{\o }{\omega }
\renewcommand{\O }{\Omega }
\newcommand{\ov}{\overline}
\newcommand{\be}{\begin{equation}}
\newcommand{\ee}{\end{equation}}
\newcommand{\de}{\partial}
\newcommand{\ra}{{\rangle}}
\newcommand{\la}{{\langle}}
\renewcommand{\textbf}[1]{\begingroup\bfseries\mathversion{bold}#1\endgroup}
\newcommand{\N}{\mathbb{N}}
\newcommand{\Z}{\mathbb{Z}}
\newcommand{\cB}{{\mathcal B}}
\newcommand{\cH}{{\mathcal H}}
\newcommand{\cL}{{\mathcal L}}
\newcommand{\cO}{{\mathcal O}}
\newcommand{\cP}{{\mathcal P}}
\newcommand{\cU}{{\mathcal U}}
\newcommand{\cV}{{\mathcal V}}
\renewcommand{\epsilon}{\varepsilon}
\begin{document}

\title[Overdetermined  problems with sign-changing eigenfunctions ] 
{Overdetermined  problems with sign-changing eigenfunctions in unbounded periodic  domains }



\author{Ignace Aristide Minlend}
\address{I.A.M: Faculty of Economics and Applied Management, University of Douala,  BP 2701, Douala, Littoral Province, Cameroon}
\email{\small{ignace.a.minlend@aims-senegal.org} }



\keywords{Overdetermined problems,  Sign-changing solutions,  bifurcation }

\begin{abstract}
We prove the existence of  nontrivial unbounded  domains  $\O$ in  the Euclidean space  $\R^d$ for which the Dirichlet eigenvalue problem for the Laplacian on $\Omega$ admits  sign-changing  eigenfunctions with constant Neumann  values on $\partial \Omega$. We  also  establish a similar result by studying  a  partially overdetermined problem on  domains with two boundary components and opposite  Neumann  boundary values. The domains we construct  are periodic in some variables and radial in the other variables, and they  bifurcate from straight (generalized) cylinder or slab. 
\end{abstract}
\maketitle

\textbf{MSC 2010}:  35J57, 35J66,  35N25, 35J25, 35R35, 58J55

\maketitle

\section{Introduction and main result}

This paper  is concerned with  the  existence of periodic  sign-changing solutions to  some prototypes of   overdetermined  elliptic boundary value problems in  nontrivial  unbounded domains of the Euclidean space $\R^d$, $d\geq 2$. In the recent year, many  works have been devoted  to the study  of the overdetermined  problem \begin{align}\label{probcafa}
-\Delta u = f(u) \quad \text{in $\Omega$}, \qquad u=0, \quad \partial_\eta u=\textrm{const}  \qquad \text{on $\partial \Omega$},
\end{align} where  $f: [0,\infty) \to \R$ is  a locally Lipschitz function and  $\eta$ is the unit outer to the boundary.

In  1971, Serrin  studied  the case  $f\equiv1$  in the pioneer paper \cite{Serrin}  and proved by Alexandrov \cite{Alexandrov} moving plane method that the only bounded and
regular domains  in the Euclidean space  $\mathbb{R}^d$, $d\geq 2$, where \eqref{probcafa} is solvable are balls.  Soon after  this  celebrate result  was  communicated to the PDE community, several authors have developed interest in  the study of symmetry properties   as well as rigidity  results  related  to problem 
\eqref{probcafa}.  We refer the reader to   \cite{Alessandrini, Gazzola, Greco,  Philippin,  GarofaloLewis, Prajapat, PaynePhilippin, Lamboley, FragalaGazzola, BrockHenrot, Reichel, W. Reichel, BerchioGazzolaWeth, FraGazzolaKawohl,farina-valdinoci,farina-valdinoci:2010-1,farina-valdinoci:2010-2,farina-valdinoci:2013-1, FarimaKawohl, farina-valdinoci:2013-2, BCNI}.  In 1997,  Berestycki, Caffarelli and Nirenberg \cite{BCNI} conjectured that, if  $\O$  is a  domain such that $\R^{d}\setminus \ov \O$ is connected, the existence  of a bounded positive solution to problem  \eqref{probcafa} implies  that  $\O$  must  be a half-space, a ball, the complement of a ball, or a circular-cylinder-type domain $\R^j\times C$ (up to rotation and translation), where $C$ is a ball or a complement of a ball in $\R^{d-j}.$  For  $f(u)=\lambda_1 u$,  where  $\lambda_1$
is the first eigenvalue of the Laplacian with $0$-Dirichlet boundary
condition, this conjecture was disproved in dimension  $d\geq 3$ by
Sicbaldi \cite{Sic},  and later in dimension $d\geq2$ by  Sicbaldi
and  Schlenk in \cite{ScSi}, where they proved existence of periodic
and unbounded extremal domains  bifurcating from straight cylinder
$B_1\times \R$.  Subsequently,  Fall, Weth and the author  studied  case  $f\equiv 1$ in  \cite{ Fall-MinlendI-Weth} by constructing  periodic unbounded domains bifurcating from generalized-type cylinder domains in $\R^d$. Further results addressing    \eqref{probcafa}  in spaces forms can be found in   \cite{Fall-MinlendI-Weth2, Ku-Pra, morabito-sicbaldi}.

It is important  to note  that  the results  in  the previous  works  all assume a sign on the solution, while only few results appear in the literature  regarding the  existence of sign-changing solutions in the context of overdetermined boundary value problems. In fact we  are only able  to  cite  the   contributions \cite{Ramm, Deng, BCanutoDRial, B.Canuto,Fall-MinlendI-Weth4, Ruiz-arxiv} addressing  \eqref{probcafa} in bounded domains  and for particular functions $f$. In  particular, \cite{BCanutoDRial} considers   
\begin{align*}
  \Delta u+\o^2 u=-1 \quad \text{in $\Omega$}, \quad u=0, \quad \partial_\eta u=\textrm{const}  \quad \text{on $\partial \Omega$}
\end{align*}
 and proves   under suitable assumptions on $\o\in \R$ that the  only bounded domain $\O$ such that there exists a solution 
is the ball $B_1$, independent  on the sign of $u$, provided  $\partial \O$ is a perturbation of the unit sphere $\partial B_1$ in $\R^d$. A similar result   was derived  in \cite{B.Canuto} by considering a different Neumann boundary condition.  Moreover, in the  work \cite{Ruiz-arxiv}, Ruiz is considered  the problem \eqref{probcafa} with a specific nonlinearity $f$  and proved  the existence of sign changing solutions  to the  problem \eqref{probcafa} in perturbations of the unit ball $B_1$. In contrast, less is known regarding the existence of sign-changing solutions to problem  \eqref{probcafa}  in nontrivial  \emph{ unbounded}   domains. To our level of information, we can only  quote the recent   contribution \cite{Fall-MinlendI-Weth4} by Fall, Weth and the author where we  proved the  existence of a family of unbounded   subdomains  $\O$ bifurcating from that flat cylinder $B_1\times \R$  and for which the Neumann eigenvalue problem for the Laplacian on $\Omega$ admits sign-changing eigenfunctions with constant Dirichlet values on $\partial \Omega$.

In this paper, we deal the existence of sign-changing  solutions  in  nontrivial  \emph{ unbounded}   domains by considering  two   prototypes of problem  \eqref{probcafa}.  The  first problem we study  is the Dirichlet eigenvalue overdetermined problem
\begin{equation} \label{eq:Dirichlesign}
  \left \{
    \begin{aligned}
       -\Delta u &=  \lambda u && \qquad \text{in $\Omega$,}\\
              u &=0 &&\qquad \text{on $\partial \Omega$,}\\
             \frac{\partial u}{\partial \eta}  &= c &&\qquad \text{on $\partial \Omega,$}
    \end{aligned}
 \right.
\end{equation}
where $c>0$, $\lambda>0$ and $\eta$ is the outer normal  vector field a the boundary. In  Section \ref{Parrtialy},  we treat a partially overdetermined problem of the form
\begin{equation} \label{eq:perturbed-partially}
  \left \{
    \begin{aligned}
       -\Delta u &=  \mu u && \qquad \text{in $\Omega$,}\\
              u &=0 &&\qquad \text{on $\partial \Omega$,}\\
             \frac{\partial u}{\partial \eta}  &= \pm \gamma  &&\qquad \text{on $\partial \Omega^{\pm}$,}\\
    \end{aligned}
       \right.
     \end{equation}
for  some $\mu >0$ and $\gamma >0$. 
Here, $$ \partial \Omega^+= \{ (x, t) \in \partial \Omega, \quad t> 0  \}\quad \textrm{and} \quad\partial \Omega^-= \{ (x, t) \in \partial \Omega,  \quad t< 0  \}.$$  The strategy we use  for this work  allows  us to deduce the existence of sign-changing solutions for related problems   to  \eqref{eq:Dirichlesign} and  \eqref{eq:perturbed-partially},   with  specific  \emph{non constant} Neumann boundary conditions.  

To state our first main result, we fix $\alpha \in (0,1)$ and define  by  $C^{2,\alpha}_{p, e}(\R^m)$  the space of  even  and  $2\pi \Z^m$ -periodic $C^{2,\alpha}$-functions on $\R^m$, and we let $\cP^{2,\alpha}_{p, e}(\R^m)$ denote the open subset of  $C^{2,\alpha}_{p, e}(\R^m)$  made of strictly positive functions which are  invariant with respect to  coordinate permutations.  For a function $h \in \cP^{2,\alpha}_{p, e}(\R^{m})$, we  define  the domain               
\begin{equation}\label{eq:PertTorus}
\Omega_h:= \left\{\left(t,x \right)\in  \R^N\times \R^{m} \::\: |t|<\frac{1}{h(x)}  \right\}\subset \R^{N+m}.
\end{equation}

\begin{Theorem}\label{Theo1-ND}
For each  $N, m, n  \in \N$ be positive integers.  Then there exist  $\e_n>0$ and (explicit) constants $\mu_n, \kappa_n, c_n >0$, $\beta_n,\delta_n \in \R \setminus \{0\}$, depending only on $N$ and $n$, and a smooth curve
$$
(-{\e_n},{\e_n}) \to   (0,+\infty) \times  \cP^{2,\alpha}_{p, e}(\R^m) ,\qquad s \mapsto (\mu^n_s,h^n_s)
$$
with $\mu^n_s \big|_{s=0}= \mu_n$, 
$$
h^n_s(x)= \kappa_n\sqrt{\mu^n_s} + s \beta_n \vartheta(x) + o(s) \qquad \text{as $s \to 0$ uniformly on $\R$,}
$$
where  $$\vartheta(x):=\cos(x_1)+\cdots+\cos(x_m)$$
and the property that the overdetermined boundary value problem
\begin{equation}\label{eq:solved-main-ND}
  \left \{
    \begin{aligned}
       \D w^n_s+ {\mu^n_s} w_s &=  0 && \qquad \text{in $ \Omega_{ h^n_s}$,}\\
             w^n_s&=0 &&\qquad \text{on $\partial \Omega_{ h^n_s}$,}\\
              |\n w^n_s | &=c_n {\sqrt{\mu^n_s}} &&\qquad \text{on $\partial  \Omega_{ h^n_s}$}
    \end{aligned}
       \right.
\end{equation}
admits a classical solution $w^n_s$ for every $s \in (-\e_n,\e_n)$ which is radial in $t$, even in $x_1,\dots, x_m$, ${2\pi} $-periodic in
$x_1,\dots, x_m$ and invariant with respect to permutations of the
variables $x_1,\dots,x_m$. Moreover, we have
\begin{equation}
  \label{eq:w-s-expansion1}
w_s(\frac{t}{h^n_s(x)},x)= U_n(|t|) +s\bigl\{\psi_1(|t|)+  \delta_n \,|t| U_n'(|t|)\bigr\} \vartheta(x) + o(s)  \quad \text{as $s \to 0$}
\end{equation}
uniformly on $B_1 \times \R^m$, where $t \mapsto U_n(|t|)$ is the $n$-th nonconstant radial Dirichlet  eigenfunction of the Laplacian on the unit ball $B_1$ of $\R^N$, and $t \mapsto \psi_1(|t|)$ is a  suitable radial function defined on  the unit ball $B_1$.
\end{Theorem}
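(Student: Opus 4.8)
The plan is to set up a bifurcation argument in the spirit of Sicbaldi--Schlenk \cite{ScSi} and Fall--Minlend--Weth \cite{Fall-MinlendI-Weth, Fall-MinlendI-Weth4}, using the Crandall--Rabinowitz theorem at a simple eigenvalue. First I would straighten the domain: for $h \in \cP^{2,\alpha}_{p,e}(\R^m)$ the diffeomorphism $(t,x) \mapsto (t/h(x), x)$ maps $\Omega_h$ onto the fixed cylinder $B_1 \times \R^m$, and pulling back the Laplacian produces an elliptic operator $L_h$ with coefficients depending smoothly on $h$ and its derivatives. The Dirichlet eigenvalue problem $-\Delta w = \mu w$ in $\Omega_h$, $w=0$ on $\partial\Omega_h$, becomes $L_h w = \mu w$, $w=0$ on $B_1 \times \R^m$; for each $h$ near the constant $\kappa_n\sqrt{\mu}$ one selects the eigenbranch $\mu(h)$ and eigenfunction $w(h)$ continuing the $n$-th radial Dirichlet eigenvalue of $B_1$ (scaled by $h^{-2}$), normalized appropriately — this is where one uses that the target radial eigenvalue is simple among radial functions. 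The remaining overdetermined condition $|\nabla w| = c_n\sqrt{\mu}$ on $\partial\Omega_h$ then reads, after unfolding, as a scalar equation $F(s\text{-parameter}, h) = 0$, i.e. $\partial_\nu w(h)|_{\partial\Omega_h}$ should be the constant $-c_n\sqrt{\mu(h)}$ (up to sign and the fixed normalization); subtracting off this constant one gets a map
\begin{equation*}
F : \R \times \mathcal{U} \to C^{1,\alpha}_{p,e}(\R^m)/\R, \qquad (\mu, h) \mapsto \Bigl(x \mapsto \partial_\nu w(h)(x) \Bigr) \ (\mathrm{mod}\ \text{constants}),
\end{equation*}
where $\mathcal{U}$ is a neighbourhood of the constant function in $\cP^{2,\alpha}_{p,e}(\R^m)$, and $F(\mu, \kappa_n\sqrt{\mu}) = 0$ along the trivial branch corresponding to the straight cylinder.

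Next I would compute the linearization $D_h F$ at the trivial branch. Differentiating the straightened equations in the direction of a perturbation $\dot h$ gives a linear boundary value problem for the derivative $\dot w$ of the eigenfunction (together with the derivative $\dot\mu$ of the eigenvalue, fixed by the normalization); separating variables by expanding $\dot h$ in the Fourier basis $\{\cos(k\cdot x)\}$ diagonalizes the operator, and the component of $D_h F$ acting on the mode $\cos(k \cdot x)$ reduces to a one-dimensional ODE boundary value problem on $[0,1]$ whose solvability is governed by a Bessel-type function — concretely, by whether $|k|^2$ coincides with an eigenvalue of a certain singular Sturm--Liouville problem on the ball. The key fact, exactly as in \cite{ScSi, Fall-MinlendI-Weth4}, is that there is a critical value $|k|^2 = 1$ (achieved by the single-mode functions $\cos x_j$, hence by $\vartheta = \cos x_1 + \cdots + \cos x_m$) at which the linearized operator $D_h F(\mu_n, \kappa_n\sqrt{\mu_n})$ has a one-dimensional kernel spanned by $\vartheta$ and a closed range of codimension one. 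One must also verify the transversality (Crandall--Rabinowitz nondegeneracy) condition $D_\mu D_h F[\vartheta] \notin \mathrm{Range}\, D_h F$, which again reduces to a non-vanishing statement for an explicit Bessel expression.

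I expect the main obstacle to be this spectral analysis of the linearized operator: one needs a clean description of the Dirichlet-to-Neumann-type map on each Fourier mode, an explicit identification of the parameter value $\mu_n$ (and the constants $\kappa_n, c_n, \beta_n, \delta_n$) at which the mode $|k|=1$ becomes resonant, and a proof that all other modes $|k| \ge 2$ (and $|k|=0$) stay non-resonant so that the kernel is genuinely one-dimensional; the permutation-invariance and evenness restrictions on the function space are what force $\vartheta$ to be the unique kernel direction. Once the Fredholm and transversality properties are in place, the Crandall--Rabinowitz theorem yields the smooth curve $s \mapsto (\mu^n_s, h^n_s)$ with $h^n_s = \kappa_n\sqrt{\mu^n_s} + s\beta_n \vartheta + o(s)$, and the stated expansion \eqref{eq:w-s-expansion1} of $w^n_s$ follows by reading off the first-order term $\dot w = \psi_1(|t|)\vartheta(x) + \delta_n |t| U_n'(|t|)\vartheta(x)$ from the linearized equation — the term $|t|U_n'(|t|)$ being precisely the contribution of differentiating the domain-straightening change of variables, and $\psi_1$ the particular solution of the resulting inhomogeneous radial ODE. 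Finally, elliptic regularity upgrades $w^n_s$ to a classical solution, and the symmetry/periodicity properties are inherited from the invariance of the whole construction under the relevant symmetry group, completing the proof.
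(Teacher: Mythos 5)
Your proposal follows the eigenvalue-branch (Dirichlet-to-Neumann) strategy of Schlenk--Sicbaldi: first solve the Dirichlet eigenvalue problem on $\Omega_h$ to get a branch $h\mapsto(\mu(h),w(h))$, then impose the Neumann condition as a scalar equation $F(\mu,h)=0$ in $h$ alone. The paper takes a genuinely different route: it never selects an eigenbranch. Instead it keeps $\lambda$ (equivalently $\mu$) as a free parameter, exploits the fact that $u_n(t,x)=I_{N/2-1}(j_{N/2-1,n}|t|)$ solves the interior equation $L_{\lambda,n}u_n=0$ for \emph{every} $\lambda$ (being independent of $x$), and treats the PDE together with both boundary conditions as one coupled system $F_\lambda(u,h)=0$, which is then reduced to a single unknown via the substitution $h=h_u=u(e_1,\cdot)/(\nabla u_n(e_1)\cdot e_1)$. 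The resulting linearized operator is the pair $(L_{\lambda,n}v,\ \nabla v(e_1,\cdot)\cdot e_1+(N-1)v(e_1,\cdot))$, and the kernel/range analysis is done through the Robin-type radial eigenvalue problem with eigenvalues $\nu_\ell$ and a Fredholm argument for the associated bilinear form. Your route, when it works, buys a cleaner scalar bifurcation equation on $\partial\Omega$; the paper's route buys freedom from any simplicity assumption on the Dirichlet eigenvalue.

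That freedom is exactly where your proposal has a genuine gap. For $n\ge 2$ the target eigenvalue $\mu_n$ is a higher, sign-changing eigenvalue of the Dirichlet Laplacian on an unbounded periodic cylinder; it is embedded in a cluster of other eigenvalues, and the paper explicitly remarks that the eigenvalue $\mu^n_s$ of the constructed domains is \emph{not} simple (one may replace $\vartheta$ by $\frac1m\prod_j\cos(x_j)$). Hence the map $h\mapsto(\mu(h),w(h))$ is not well defined or smooth without further work. Your parenthetical appeal to simplicity ``among radial functions'' does not suffice: since $h$ depends on $x$, the perturbed eigenfunction is of the form $W(|t|,x)$ and genuinely couples $t$ and $x$, so the relevant space is that of functions radial in $t$ but $x$-dependent (even, $2\pi$-periodic, permutation-invariant). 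In that space the eigenvalues of the straight cylinder of radius $R_*=\sqrt{j_{N/2-1,n}^2-\nu_1}$ are $j_{N/2-1,k}^2/R_*^2+|j|^2$, and one must check that none of these with $(k,j)\ne(n,0)$ equals $j_{N/2-1,n}^2/R_*^2$ --- a nontrivial arithmetic statement about Bessel zeros and $\nu_1$ (it does hold, using $\nu_1<j_{N/2-1,1}$, but it must be proved, and simplicity must then be propagated to nearby $h$). Without this step the very definition of your map $F$ collapses. The remaining ingredients of your outline (Fourier diagonalization of the linearization, the resonance at $|k|=1$ forced by the Robin-type Sturm--Liouville problem, permutation invariance to get a one-dimensional kernel, the transversality computation $\partial_\lambda DG_\lambda(0)v_*=(-v_*,0)$, and the identification of the first-order term $\psi_1(|t|)\vartheta+\delta_n|t|U_n'(|t|)\vartheta$ with the geometric correction coming from the change of variables) match the paper's analysis and are correctly anticipated.
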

Before we state our second main result,  we make the following  observations regarding Theorem \ref{Theo1-ND}. 

\begin{Remark}\label{eq:RKmain-ND}
Let $J_{\nu}$ denote the Bessel function of the first kind of order $\nu>-1$, and let
$$
0< j_{\nu,1} < j_{\nu,2} < j_{\nu,3} < \dots
$$
denote the ordered sequence of zeros of $J_\nu$. We put   $I_{\nu }(r):= r^{-\nu }J_{\nu }( r)$ and  let  $\sqrt{\nu_1}$ is the first positive zero of 
$$
z\mapsto \frac{z J_{N/2}(z)}{J_{N/2-1}(z)}-(N-1). 
$$
Then constants  in Theorem  \ref{Theo1-ND}  are given by 
\begin{align*}
\mu_n& = \frac{j^2_{N/2-1,n}}{j^2_{N/2-1,n} - \nu_1},\qquad   \kappa_n = \frac{1}{j_{N/2-1,n}}, \qquad   c_n=I'_{N/2-1}(j_{N/2-1,n})  \\
\beta_n &=\frac{I_{N/2-1} ( \sqrt{\nu_1}) }{j_{N/2-1,n} I_{N/2-1}'(j_{N/2-1,n})\sqrt{j_{N/2-1,n}^2-\nu_1}},\qquad  \delta_n=-\frac{I_{N/2-1} ( \sqrt{\nu_1}) }{ j_{N/2-1,n} I_{N/2-1}'(j_{N/2-1,n})}. 
\end{align*}
Moreover, the functions $U_n$  and  $\psi_1$ in Theorem~\ref{Theo1-ND} is defined  by 
\begin{equation*}
r \mapsto U_n(r)= I_{N/2-1}(j_{N-1/2,n}\, r)\quad \textrm{and}\quad  r \mapsto \psi_1(r)= I_{N/2-1} (r \sqrt{\nu_1}).
\end{equation*}
In comparing   Theorem  \ref{Theo1-ND}  with the work  by   Schlenck and Sicbaldi \cite{ScSi} for  positive eigenvalue problem and for $m=1$,  we underline  the  extremal domains  for  the first eigenvalue of the Dirichlet Laplacian  in  \cite{ScSi}  bifurcate from the straight cylinder $\partial B_1\times \R$  with a  period 
$$T_{*}(N)=\frac{2\pi}{  \sqrt{j_{N/2-1,1}^2-\rho^2_{N/2-1,1}} },$$  where  $\rho_{N/2-1,1}$ is the unique zero of $$z\mapsto z  J_{N/2-2}(z)+J_{N/2-1}(z)$$ in the interval $(0, j_{N/2-1,1})$. 

In our case,   bifurcations are $2\pi$-periodic and  occur at the cylinder radius 
$$R_*(N)= \sqrt{j_{N/2-1,1}^2-\nu_{1}}.$$  
\end{Remark}

We now turn our attention  on problem \eqref{eq:perturbed-partially}. We stress that in  contrast to problem \eqref{eq:solved-main-ND}, where the solution  $w$ is assume  to be radial in the $t$ variable, we  require the  solution of  \eqref{eq:perturbed-partially} to  be odd  in this variable  for the condition   $\frac{\partial u}{\partial \eta}  = \pm 1\quad \text{on $\partial \Omega^{\pm}$}$ to hold. This leads to our second main result.
\begin{Theorem}\label{Theo2-DN}
For each  $N, m, n  \in \N$ be positive integers.  Then there exist  $\rho_n>0$ and (explicit) constants $a_n, b_n, d_n >0$, depending only on $N$ and $n$, and a smooth curve
$$
(-{\rho_n},{\rho_n}) \to   (0,+\infty) \times  \cP^{2,\alpha}_{p, e}(\R^m) ,\qquad s \mapsto (\widetilde{\mu}^n_s,\widetilde{h}^n_s)
$$
with $ \widetilde{\mu}^n_s \big|_{s=0}=d_n$, 
$$
\widetilde{h}^n_s(x)= a_n\sqrt{\widetilde{\mu}^n_s} + s b_n \vartheta(x)  + o(s) \qquad \text{as $s \to 0$ uniformly on $\R$}
$$
with 
$$\vartheta(x):=\cos(x_1)+\cdots+\cos(x_m)$$  and the property that the overdetermined boundary value problem
\begin{equation}\label{eq:solved-main-ND2}
  \left \{
    \begin{aligned}
       \D \widetilde{w}^n_s+ \widetilde{\mu}^n_s \widetilde{w}^n_s &=  0 && \qquad \text{in $ \Omega_{\widetilde{h}^n_s}$,}\\
             \widetilde{w}^n_s &=0 &&\qquad \text{on $\partial \Omega_{\widetilde{h}^n_s}$,}\\
 \frac{\partial  \widetilde{w}^n_s}{\partial \eta_s}  &= \pm\frac{1}{\sqrt{\gamma_n(s)}}  &&\qquad \text{on $\partial  \Omega^{\pm}_{\widetilde{h}^n_s}$}
    \end{aligned}
       \right.
\end{equation}
admits a classical solution $\widetilde{w}^n_s$ for every $s \in (-\rh_n,\rho_n)$ which is  odd  in $t$, even in $x_1,\dots, x_m$, ${2\pi} $-periodic in $x_1,\dots, x_m$ and invariant with respect to permutations of the
variables $x_1,\dots,x_m$.  Here $ \eta_s$ denotes the  unit outer normal vector filed to the  boundary $\partial  \Omega_{\widetilde{h}^n_s}$. Moreover, we have
\begin{equation}
  \label{eq:w-s-expansion}
\widetilde{w}^n_s (\frac{t}{\widetilde{h}^n_s(x)},x)= v_n(t) + s \Bigl(\sin(\frac{\pi t}{2} )- (-1)^n \cos( n\pi t \Bigr)\vartheta(x)  + o(s)\quad \text{as $s \to 0$}
\end{equation}
uniformly on $(-1, 1) \times \R^m$, where $t \mapsto v_n(t)=(-1)^n\sin(n \pi t)$ is the $n$-th nonconstant  Dirichlet  eigenfunction of the Laplacian on  $(-1, 1) \subset \R.$ 
\end{Theorem}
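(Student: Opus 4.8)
I would prove Theorem~\ref{Theo2-DN} by the same Crandall--Rabinowitz bifurcation scheme as Theorem~\ref{Theo1-ND}, now bifurcating from the trivial branch of straight slabs. The only structural change is that the radial Dirichlet eigenfunctions on the ball are replaced by the \emph{odd} Dirichlet eigenfunctions $v_n(t)=(-1)^n\sin(n\pi t)$ on $(-1,1)$, and that the role played in Remark~\ref{eq:RKmain-ND} by $\nu_1$ is now played by $\pi^2/4$ --- the first eigenvalue of $-\tfrac{d^2}{dt^2}$ on $(-1,1)$ with Neumann conditions at the endpoints, whose (odd) eigenfunction $t\mapsto\sin(\tfrac{\pi t}{2})$ is exactly the profile $\psi$ in \eqref{eq:w-s-expansion} (for $N\ge 2$ one would replace $(-1,1)$ and $v_n$ by the ball $B_1^N$ and its odd Dirichlet eigenfunctions). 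First I would straighten the domain: for $h\in\cP^{2,\alpha}_{p,e}(\R^m)$ near a constant, the diffeomorphism $(s,x)\mapsto(s/h(x),x)$ carries the fixed slab $\cS:=(-1,1)\times\R^m$ onto $\Omega_h$, and $\ov w(s,x):=w(s/h(x),x)$ turns $\D w+\mu w=0$ into $L_h\ov w+\mu\ov w=0$ for an elliptic operator $L_h$ depending smoothly on $h,\n h,\n^2 h$, with $L_c=c^2\partial_{ss}+\D_x$ when $h\equiv c$. Since $\ov w$ vanishes on $\partial\cS$, its tangential derivatives vanish there, so $\partial_\eta w$ equals $h\sqrt{1+|\n h|^2/h^4}\,\partial_s\ov w$ on $\{s=1\}\times\R^m$ and minus that on $\{s=-1\}\times\R^m$; once $\ov w$ is required to be odd in $s$, the two conditions in \eqref{eq:solved-main-ND2} collapse to the single requirement that $x\mapsto h(x)\sqrt{1+|\n h(x)|^2/h(x)^4}\,\partial_s\ov w(1,x)$ be constant.

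Next I would isolate the relevant eigenbranch and set up the bifurcation operator. In the class of functions odd in $s$ and even, $2\pi$-periodic and permutation-invariant in $x$, the Dirichlet problem $L_h\ov w+\mu\ov w=0$ in $\cS$ has a distinguished branch $\mu=\mu_n(h)$, $\ov w=\ov w_n(h)$ (normalized in $L^2$ with a fixed sign) emanating from $\mu_n(c)=(n\pi c)^2$, $\ov w_n(c)=v_n$. After passing to the torus $\R^m/(2\pi\Z)^m$ this problem is Fredholm, and $\mu_n$ is simple in the above class at the base constant $c_*$ found below: no mode $n'\neq n$ collides with it there, since for $1\le n'<n$ the ratio $\frac{n^2-(n')^2}{n^2-1/4}$ lies in $(0,1)$ --- numerator positive and $\le n^2-1<n^2-\tfrac14$ --- hence is no admissible mode index, while $n'>n$ gives a larger eigenvalue; so standard perturbation theory makes $(\mu_n,\ov w_n)$ smooth in $h$ nearby. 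Writing $\Phi_h(x):=h(x)\sqrt{1+|\n h(x)|^2/h(x)^4}\,\partial_s\ov w_n(h)(1,x)$, I set
\[
G(h):=\Phi_h-\ave_{(-\pi,\pi)^m}\Phi_h\,dx,
\]
a smooth map from a neighbourhood of the constants in $C^{2,\alpha}_{p,e}(\R^m)$ into the mean-zero subspace of $C^{1,\alpha}_{p,e}(\R^m)$, with $G(h)=0$ iff $(\mu_n(h),\ov w_n(h))$ solves \eqref{eq:solved-main-ND2} and $\gamma_n$ equals the displayed average. Every constant $h\equiv c$ gives an $x$-independent $\ov w_n$, so $G(c)=0$ for all $c>0$: this is the trivial branch, and I would take $c$ (equivalently $\mu$) as bifurcation parameter and mean-zero functions as phase space.

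The linearization is cleanest in the original (varying-domain) coordinates. Perturbing $\{|t|<\rho_0\}$, $\rho_0=1/c$ (eigenfunction $u_0(t)=\sin(n\pi t/\rho_0)$, $\mu_0=(n\pi/\rho_0)^2$), by $\rho=\rho_0+\e\phi$, the first variation of the overdetermined condition along the mode $\phi=\phi_0\cos(k\cdot x)$ reduces --- using that $u_0''=-\mu_0 u_0$ vanishes on the boundary --- to $g_k'(\rho_0)$, where $g_k$ solves $g_k''+(\mu_0-|k|^2)g_k=0$ on $(-\rho_0,\rho_0)$, is odd, and $g_k(\rho_0)=-\phi_0\,u_0'(\rho_0)$ (in particular $\mu_1=0$ for $k\neq0$). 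Hence the symbol of $DG(c)$ on the mode $|k|^2=j$ is a nonvanishing multiple of $\cos(\sqrt{\mu_0-j}\,\rho_0)$ when $\mu_0>j$ and of $\cosh(\sqrt{j-\mu_0}\,\rho_0)$ when $\mu_0<j$; the bifurcation condition $g_k'(\rho_0)=0$ for the first nonconstant mode $|k|=1$ reads $\sqrt{\mu_0-1}\,\rho_0=\tfrac{\pi}{2}$, i.e.\ $\rho_0=\pi\sqrt{n^2-\tfrac14}$, equivalently $c_*=a_n\sqrt{d_n}$ with $a_n=\tfrac{1}{n\pi}$, $d_n=\tfrac{4n^2}{4n^2-1}$. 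At $c_*$ one has $\mu_0-j<0$ for every admissible $j\ge2$, so the symbol there is a nonzero $\cosh$ on all higher modes; thus $\ker DG(c_*)=\spann\{\vartheta\}$ (permutation-invariance forcing the $|k|=1$ eigenspace to be one-dimensional), $DG(c_*)$ is Fredholm of index $0$, and since the $\vartheta$-symbol is a nonvanishing multiple of $c\mapsto\cos(\sqrt{(n\pi)^2-c^{-2}})$, which has a simple zero at $c_*$, the transversality condition holds. Crandall--Rabinowitz then yields the smooth curve $s\mapsto(\widetilde{\mu}^n_s,\widetilde{h}^n_s)$ with $\widetilde{h}^n_s=c_*+s\,b_n\vartheta+o(s)$; transporting back through the change of variables gives the first-order term of $\widetilde{w}^n_s(t/\widetilde{h}^n_s(x),x)$ as $v_n(t)$ plus $s\vartheta(x)$ times a combination of $\sin(\tfrac{\pi t}{2})$ and a multiple of $t\,v_n'(t)$, which is \eqref{eq:w-s-expansion}; and since $v_n$ is sign-changing on $(-1,1)$, so is $\widetilde{w}^n_s$ for $|s|$ small, while elliptic regularity makes it classical.

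\emph{The main obstacle} is this linearized analysis: one must make the symbol of $DG$ explicit enough to locate $c_*$, to prove $\ker DG(c_*)$ is \emph{exactly} one-dimensional --- where the non-resonance of all higher permutation-invariant modes (the arithmetic of $4n^2-1$) is essential, and which also underlies both the Fredholm property and the simplicity of $\mu_n$ --- and to verify the transversality hypothesis of the Crandall--Rabinowitz theorem. The accompanying point requiring care is functional-analytic: choosing the Hölder spaces and a consistent normalization and sign of $\ov w_n(h)$ so that $(\mu_n,\ov w_n)$, hence $G$, are genuinely smooth near the constants, which again rests on that simplicity.
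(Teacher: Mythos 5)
Your proposal is correct in substance and reaches the same bifurcation point, kernel and transversality condition as the paper, but the functional-analytic reduction is genuinely different. The paper keeps the state $u$ on the fixed slab as the unknown and the coefficient $\lambda$ in $\widetilde{L}_{\lambda,n}=\lambda\Delta_x+\partial_{tt}+n^2\pi^2$ as the bifurcation parameter: it slaves the domain function to the boundary trace, $h=h_u=u(1,\cdot)$ (Remark 7.1), so that the map $\widetilde F_\lambda(u)=(\text{PDE residual},\text{Neumann residual})$ has the trivial branch $u\equiv 0$, and the range characterization is obtained via the Fredholm alternative for the associated bilinear form (Lemma 8.1(iii)). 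You take the dual reduction in the style of Schlenk--Sicbaldi: the domain function $h$ is the unknown, the state is slaved to $h$ through the Dirichlet eigenbranch $(\mu_n(h),\overline w_n(h))$, the equation is the mean-zero projection of the Neumann data, and the bifurcation parameter is the slab width $c$. This buys a very transparent mode-by-mode symbol ($\sqrt{\mu_0-j}\,\cot$ versus $\sqrt{j-\mu_0}\,\coth$), from which the location $\rho_0=\pi\sqrt{n^2-\tfrac14}$, the one-dimensionality of the kernel, the Fredholm property (the symbol grows like $|k|$) and transversality all drop out; but it costs you the extra lemma that $h\mapsto(\mu_n(h),\overline w_n(h))$ is smooth, which rests on the simplicity of $\mu_n$ in the symmetric class --- your arithmetic check that $\frac{n^2-(n')^2}{n^2-1/4}\in(0,1)$ is never an admissible $|k|^2$ is exactly what is needed and is the same non-resonance computation the paper performs in Lemma 8.1(i). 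Two small points to tighten: (a) you should note explicitly that the $k=0$ mode is harmless because $n\pi\neq(\ell+\tfrac12)\pi$, not only because you quotient by constants; and (b) in matching the expansion $\widetilde h^n_s=a_n\sqrt{\widetilde\mu^n_s}+sb_n\vartheta+o(s)$ you implicitly use that the first variation of $\mu_n$ along the mean-zero direction $\vartheta$ vanishes (Hadamard's formula with constant $|\partial_\eta w|$ and mean-zero normal speed); this deserves a line, since in the paper's parametrization it is automatic.
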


\begin{Remark}\label{eq:RKmain-ND2}
The constants  in Theorem  \ref{Theo2-DN}  are given by
\begin{align*}
d_n& = \frac{n^2 }{n^2- \frac{1}{4}},\qquad   a_n = \frac{1}{n\pi},\qquad  b_n =\frac{1}{\sqrt{n^2\pi^2- \frac{\pi^2}{4}}} \qquad  \textrm{and}\quad \gamma_n(0)=n^2\pi^2- \frac{\pi^2}{4}.
\end{align*}
We note that sets of similar shape that the one of Theorem  \ref{Theo2-DN} were also obtained  in \cite{Mi.Al.Th.}, where  Thiam, Niang and the author constructed bifurcating  hypersurfaces  with constant nonlocal mean curvature.
\end{Remark}

The proof of Theorem \eqref{Theo1-ND}  is achieved by the use of Crandall-Rabinowitz bifurcation theorem, \cite{M.CR}. Our aim is  solve  the problem  \eqref{eq:Dirichlesign}  on the domain  $\O_h$  given by  \eqref{eq:PertTorus}. In Section  \ref{sectio1} we transform \eqref{eq:Dirichlesign}  to   the equivalent problem \eqref{eq:perturbed-strip-ND-Direqui1} on the fixed domain $\O_*=B_1\times \R^m$. Under the fonctional setting of Section \eqref{eq:funcsetline},  \eqref{eq:perturbed-strip-ND-Direqui1} can be reformulated  to an operation equation  $F_\lambda(u, h)=0$ between suitable Banach spaces with unknown functions $u \in C^{2,\alpha}_{p,rad}(\overline{\O_*})$ and $h \in C^{2,\alpha}_{p,e}(\R^m)$ for some $\alpha \in (0,1)$.
Here  $C^{2,\alpha}_{p,rad}(\overline{\O_*})$ denotes the space of $C^{2,\alpha}$-functions $u=u(t,x)$ which are radial in $t$ and $2\pi$ periodic and even in each of the variables  $x_1, \dots, x_m$.  By the help of the Remark \eqref{eqrresolutionform}, we  are led to  reducing  the equation  $F_\lambda(u, h)=0$ to an equivalent  the type $G_\lambda(u)=0$ for some function $(\lambda,u) \mapsto G_\lambda(u)$, see \eqref{eq bifurcate}.  In Section~\ref{analyis}, we analyse  the linear  operator $D_uG_\lambda(0):  X_2 \rightarrow  X_0\times Z_1$ computed in Proposition  \ref{eqlinearised}.   In order to get a one dimensional kernel, we needed  to restrict the mapping $G_\lambda$ on the space of functions $u(t, x)$  which are invariant under permutations of  coordinates in $\R^m$.  Next applying  Fredholm's alternative \cite[Theorem 2.3]{Kress}, we  show that  $D_uG_\lambda(0):  X_2 \rightarrow  X_0\times Z_1$  has codimension one and satisfies  the  transversality condition  in the Crandall-Rabinowitz bifurcation theorem \cite{M.CR}. 

The proof of Theorem   \ref{Theo2-DN} follows  similar steps with the slight difference that instead of  radial functions $u(t, x)$ in the variable $t$, we need to  work on  the space  of odd  functions in the variable $t$. 

We  close  this introduction by highlighting  as explained in   Remark  \ref{noncostNeum}, the existence of sign-changing solutions to the Dirichlet  problem  in \eqref{eq:Dirichlesign}  with a  specific  non constant Neumann boundary  value  involving   the boundary parameter $h$  in  \eqref{eq:PertTorus} and expressed by $c(x):=g(h(x))$,  for some function $g:(0, +\infty)\rightarrow (0, +\infty) $ see  \eqref{eq:Neumandaat2}. One could then ask  for the  class of functions $g$ such that  the Dirichlet problem in \eqref{probcafa} admits a solution with   a non constant Neumann boundary  value  involving   the boundary parameter of a perturbed  domain. This question is left open. 

The  paper ends  with Section \ref{eq: Cradal Rabi1},  where  we  state  the  Crandall-Rabinowitz bifurcation theorem for the reader convenience.

\bigskip
\noindent \textbf{Acknowledgements}: 
This work was  carried  out when  the author  was visiting the Institute of Mathematics and Informatics of the Goethe University Frankfurt as a Humboldt postdoctoral fellow. He is gratefully to the Humboldt Foundation for funding his research  and wishes to thank Department of Mathematics  of  the Goethe-University Frankfurt  for the hospitality. The author  also thanks his host Prof. Tobias Weth and Prof. Mouhamed Moustapha Fall for their  helpful suggestions  and  comments  throughout
the writing of this paper.

\section{The pull back of problem \eqref{eq:Dirichlesign}}\label{sectio1}
For a function $h \in \cP^{2,\alpha}_{p,e}(\R^{m})$, we  define  the domain               
\begin{equation}\label{eq:PertTorus3}
\Omega_h:= \left\{\left(t,x \right)\in  \R^N\times \R^{m} \::\: |t|<\frac{1}{h(x)}  \right\}\subset \R^{N+m}.
\end{equation}

In our first result (Theorem  \ref{Theo1-ND}), we look for  a constant $\mu >0$ and  and  nontrivial sign changing solutions $u$  to the problem
\begin{align*}
(\textrm{D}_{\mu}):   
  \left \{
    \begin{aligned}
        \D u +\mu  u &=  0 && \qquad \text{in $\Omega_h$,}\\
              u &=0 &&\qquad \text{on $\partial  \Omega_h$}\\
               \frac{\partial u}{\partial \eta_h } &=c &&\qquad \text{on $\partial \Omega_h$,}
    \end{aligned}
       \right.
\end{align*}
where  $\eta_h$ is the unit outer normal to the boundary $\partial \Omega_h$ and $c>0$. 

To  solve $(\textrm{D}_{\mu})$, consider the Dirichlet problem   
\begin{equation}
  \label{eq:perturbed-strip-ND-Dir}
  \left \{
    \begin{aligned}
  L_{\l,n} v &=  0 && \qquad \text{in $\Omega_{h}$,}\\
           v &=0 &&\qquad \text{on $\partial \Omega_{h}$,}
    \end{aligned}
       \right.
     \end{equation}
where
\begin{equation}
  \label{eq:opLlam-Dir}
  L_{\l,n}:= 
\D_{\t} + \lambda \Delta_{x}  +  j_{N/2-1,n}^2 \textrm{id}   
 \end{equation}
and   $(j_{\b,n})_n$ are the increasing positive zeros of the Bessel function $J_{\b}$.  We emphasise that   if $v$ is a solution of  \eqref{eq:perturbed-strip-ND-Dir} in $\Omega_h$, then the function 
\begin{equation}
  \label{eq:changsoluD}
w^{\lambda} (t, x):= v( t/\sqrt{\lambda}, x)
\end{equation}
solves  the  Dirichlet problem  in   $(\textrm{D}_{\mu})$  with $\mu=\frac{j_{N/2-1,n}^2 }{\lambda}$  on the domain  $\O_{\frac{h}{ \sqrt{\lambda}}}$.  Furthermore, defining 
 \begin{equation}\label{eq:Bessel}
I_{\nu }(r):= r^{-\nu }J_{\nu }( r),
 \end{equation}
we have a  solution  $u_n(t, x):=I_{N/2-1}(j_{N/2-1,n}|\t|)$ to  \eqref{eq:perturbed-strip-ND-Dir} (in the case  $h\equiv 1 $),  which satisfies  $ \D_{\t} u + j^2_{N/2-1,n} u=0$
and $$u_n(t, x)=0, \qquad \n u_n(\t, x) \cdot \t = j_{N/2-1,n} I'_{N/2-1}(j_{N/2-1,n})     \quad \textrm{on}\quad \partial B_1\times \R^m.$$
In particular   \eqref{eq:perturbed-strip-ND-Dir} has, for every fixed $\l>0$, a sequence of solutions  given by $(u_n)_n$ on  $\O_*:=B_1\times\R^m$. \\

We now express the normal derivative of  $w^{\lambda}$ in term of  the function $v$.  We note the outer unit  normal on $\partial \Omega_h$ with respect to the Euclidean
metric $g_{eucl}$  is given by  $\eta_h: \partial \O_h \to  \R^{N+m}$, with 
\begin{equation}
  \label{eq:def-mu-phi}
\eta_h (t, x) = \dfrac{ 1}{\sqrt{1+ \frac{ |\nabla h(x)|^2 }{ h^4(x)} }} \Bigl(\dfrac{t}{|t|},  \dfrac{ \nabla h(x) }{ h^2(x)}, \Bigl)   \in  \R^{N+m} \qquad \text{for
$(t, x) \in \partial \Omega_h.$}
\end{equation}
Then 
\begin{align} \label{eq:derinormalw}
  \frac{\partial w^\lambda }{\partial\eta_{ \frac{h}{ \sqrt{\lambda}} }}(t, x) = \dfrac{ 1}{\sqrt{1+ \lambda \frac{|\nabla h(x)|^2}{ h^4(x)}}} \Bigl[\n_t w^\lambda (t, x)\cdot  \dfrac{t}{|t|} +  \sqrt{\lambda }\frac{ \n_xh(x) }{ h^2(x)} \cdot \n_x w^\lambda (t, x) \Bigl].
\end{align}
Since we require $w^{\lambda} (t, x)=0$ on  $\partial \Omega_{h/\sqrt{\lambda}}=\left\{\left(t,x \right)\in  \R^N\times \R^m  \::\: |t|=\frac{ \sqrt{\lambda }}{h(x)}  \right\},
$
assuming   $w^{\lambda}(t, x)$ is a radial function in the $t$ variable, we  have  
 $w^{\lambda} ( \frac{ \sqrt{\lambda }}{h(x)} e_1, x)=0$ for all $x\in \R^m$ and differentiating  this with respect to $x$,  we find 
$$
  \n_x w^\lambda  ( \frac{ \sqrt{\lambda }}{h(x)} e_1, x)= \sqrt{\lambda } \n_t w^\lambda ( \frac{ \sqrt{\lambda }}{h(x)} e_1, x)\cdot  e_1 \frac{ \n h(x) }{ h^2(x)} 
$$ 
This with  \eqref{eq:derinormalw} provides 
\begin{align}\label{eq:derfin}
  \frac{\partial w^\lambda }{\partial\eta_{ \frac{h}{ \sqrt{\lambda}} }}( \frac{ \sqrt{\lambda }}{h(x)} e_1, x) &=\sqrt{1+ \lambda\frac{ |\nabla h(x)|^2 }{ h^4(x)} }\n_t w^\lambda  ( \frac{ \sqrt{\lambda }}{h(x)} e_1, x)\cdot e_1 =  \frac{1}{\sqrt{\lambda}} \sqrt{1+ \lambda\frac{ |\nabla h(x)|^2 }{ h^4(x)} } \n_t v ( \frac{1}{h(x)} e_1, x)\cdot e_1. 
\end{align}

From  \eqref{eq:derfin}, a radial function  the  $v$  in $t$ solves 
\begin{equation}\label{eq:per}
  \left \{
    \begin{aligned}
& L_{\l,n} v =  0 && \qquad \text{in $\Omega_h$,}\\
          & v =0 &&\qquad \text{on $\partial \Omega_h$,}\\
     &\sqrt{1+ \lambda\frac{ |\nabla h(x)|^2 }{ h^4(x)} } \n_t v ( \frac{1}{h(x)} e_1, x)\cdot e_1 = c_1 && \qquad  \text{  $ x\in \R^m$,} \\
    \end{aligned}
       \right.
     \end{equation}
if and only if the function  $w^\lambda$ in \eqref{eq:changsoluD}  solves   the problem $ (\textrm{D}_{\mu})$   with  
\begin{equation}\label{Neumanndata}
c= \frac{c_1}{\sqrt{\lambda}}.
  \end{equation}
   Note that  when $h=1$, \eqref{eq:per}  is solved by the function $u_n(t, x):=I_{N/2-1}(j_{N/2-1,n}|\t|)$, with 
 \begin{equation}\label{constc1}  
c_1=\n_t u_n(e_1, x) \cdot  e_1 =  j_{N/2-1,n} I'_{N/2-1}(j_{N/2-1,n}).
 \end{equation}
We  pull back problem \eqref{eq:per} on  the fixed unperturbed domain $\Omega_*$  using the parametrization  $$ \Psi_h: \Omega_*  \to  \Omega_h , \quad  (\t, x)  \mapsto (t, x)=(\frac{\t}{h(x)}), x),$$  with  inverse given by 
$ \Psi^{-1}_h:  \Omega_h   \to  \Omega_*, \quad  (t, x)  \mapsto   (h(x) t, x)$.\\ 
We  then  consider  the ansatz 
\begin{align}\label{eqans1}
v(t, x)= u(h(x)t, x)=u(\tau, x) \qquad \text{for some function $u: \Omega_* \to \R$.}
\end{align}
 and determine the differential operator $L^h_\lambda$ with the property 
\begin{align}\label{eqreladiffopets}
[L_{\l,n}^h u] (h(x)t, x) = [L_\lambda v](t,x) \qquad \text{for $(x,t) \in \Omega_h$}
\end{align}  
By a straightforward computation, we obtain,
\begin{align} \label{eq:reldiffope-ND-Dir}
L_{\l,n}^h u(\t,x) =&j^2_{N/2-1}u(\t,x) + \lambda \Delta_{x}u(\t,x) +   h^2(x)\D_\t u(\t,x)+ \lambda     \frac{|\n h(x)|^2}{h(x)^2}   \n^2u(\t,x)[\t,\t]   \nonumber\\
&+   \frac{ 2 \lambda  }{h(x)} \n_xh(x) \cdot   \n_x (\n_{ \tau }u(\t,x)\cdot \t) +  \lambda \frac{ \Delta h(x) }{h(x)}  \n u(\t,x)\cdot \t. 
\end{align}

With  this, problem \eqref{eq:per}  is therefore equivalent to 
\begin{equation}
  \label{eq:perturbed-strip-ND-Direqui1}
  \left \{
    \begin{aligned}
&L_{\l,n}^{h} u =  0 && \qquad \text{in $\Omega_*$,}\\
          & u =0 &&\qquad \text{on $\partial \Omega_*$,}\\
     & h \sqrt{1+\lambda \frac{ |\nabla h(x)|^2 }{ h^4(x)} } \n_\tau u(e_1, \cdot) \cdot e_1 =\n u_n (e_1,\cdot) \cdot  e_1 &&\qquad \text{in  $\R^m$.}
    \end{aligned}
       \right.
\end{equation}

\section{Functional setting }\label{eq:funcsetline}
In the following, we set 
$$
C^{k,\a}_{p,rad}(\overline \Omega_*):= \{ u \in C^{k,\alpha}(\overline \Omega_*)\::\: \text{$u$ is radial in ${\t}$, $2\pi \Z^m$- periodic and  even in $x$ } \},
$$
endowed with the norm $
u \mapsto \|u\|_{C^{k,\alpha}}:= \|u\|_{C^{k,\alpha}(\ov{\O_*})} .
$
$$
X_k:= C^{2,\a}_{p,rad}(\overline \Omega_*)
$$
as well as 
\begin{align*}
C^{k,\a}_{p,e}(\R^m):= \{z \in C^{k,\alpha}(\R^m)\::\: \text{$z$ is $2\pi \Z^m$- periodic  and  even in $x$  }\},
\end{align*}
\begin{align*}
Z^1:= C^{1,\a}_{p,e}(\R^m).
\end{align*}
We also set
\begin{align*}
Y_2^+:= \{h \in C^{2,\alpha}_{p,e}(\R^m)\::\: \text{ $h>-1$ }\} 
\end{align*}
$$
X_2^D:=\{u \in X_2 \::\: \text{$u= 0$ on $\partial \Omega_*$}\},
$$
 and define  $$ K_{\lambda}(u, h):= h \sqrt{1+\lambda \frac{ |\nabla h(x)|^2 }{ h^4(x)} } \n_\tau u(e_1, \cdot) \cdot e_1 -\n u_n (e_1,\cdot) \cdot  e_1$$
and 
 $$
F_\lambda:  X_2^{D} \times Y_2^+ \to X_0  \times Z_1  , \qquad (H_\lambda(u, h), Q_{\lambda}(u, h))
$$
where 
$$ H_\lambda(u, h):= L_{\lambda, n}^{1+h} (u+u_n)\quad \textrm{and}\quad Q_{\lambda}(u, h):= K_{\lambda}(u+u_n, 1+h).$$
By construction if 
\begin{equation}\label{eq: eqfirst}
F_\lambda(u, h)=0,
\end{equation}
then the  $\tilde{u}:=u+u_n$ solves \eqref{eq:perturbed-strip-ND-Direqui1} with $h$ replaced by $1+h$. 
We further reduce the equation  \eqref{eq: eqfirst} to a single unknown  $u$ by  eliminating  the variable $h$ in the following remark.
\begin{Remark}\label{eqrresolutionform}
Since $ (t,x) \mapsto  u_n(t, x): =I_{N/2-1}(j_{N/2-1,n}|\t|)$ solves  $ L_{\l,n} u_n=0$ in  $ \R^{N+m} \supset \O_{1+h}$,  we have by  \eqref{eqreladiffopets} that  the function
\begin{equation}\label{eq:approxisolu}
u_n(\t/(1+h))=u_n-h \n u_n \cdot \t+ O(||h||_{C^{2,\alpha}(\R^m)}^2)
\end{equation}
solves 
\begin{equation}\label{eq:approxim}
L_{\l,n}^{1+h} u_n(\t,x) =  0  \qquad \text{in $\Omega_*$}.
\end{equation}
It is then reasonable to construct  solutions  to   \eqref{eq:perturbed-strip-ND-Direqui}  on   $\O_{1+h}$  as an approximation of  the linear part in  \eqref{eq:approxisolu}. That is a solution on the form 
\begin{align}\label{eq:Approx}
U(\t, x):=u+u_n-h \n u_n \cdot \t,
\end{align}
 with $u$ and $h$ small. Note that, for this function to satisfy zero Dirichlet and constant Neumann boundary equating $\n u_n\cdot t$ on $\de \O_*$, we must have 
\begin{equation}\label{eq:foermh1}
h=h_u=\frac{ u (e_1,\cdot)  }{\n u_n(e_1) \cdot e_1}. 
\end{equation}
We have 
\begin{align}
 \n U (e_1,\cdot) \cdot  e_1&=  \n u(e_1,\cdot) \cdot  e_1+\n u_n (e_1,\cdot) \cdot  e_1- h\left( \n u_n (e_1 ) \cdot  e_1+  \n^2 u_n (e_1)[e_1,e_1] \right) \nonumber\\
 &= \n u(e_1,\cdot) \cdot  e_1+\n u_n (e_1,\cdot) \cdot  e_1+(N-2)h  \n u_n (e_1 ) \cdot  e_1\nonumber\\
 &= \n u(e_1,\cdot) \cdot  e_1+\n u_n (e_1,\cdot) \cdot  e_1+(N-2)  u (e_1, \cdot),
\end{align}
where we have used \eqref{eq:foermh1} and  the relation  $\nabla u \cdot t + {\rm Hess}(u)(t,t) = |t|^2 \Delta_t u  -(N-2)\nabla u \cdot t$ for any  radial function $u$. 
The third  condition  in \eqref{eq:perturbed-strip-ND-Direqui} with $h$ replaced by $1+h$   reads 
\begin{align} \label{eq:spacecondi}
& \sqrt{1+ \lambda\frac{ |\nabla h_u(x)|^2 }{ (1+h_u(x))^4} } ( 1+ h_u(x)) \Bigl(\n u(e_1,\cdot) \cdot  e_1+\n u_n (e_1,\cdot) \cdot  e_1+(N-2)  u (e_1, \cdot)\Bigl) \nonumber\\
&=\n u_n (e_1,\cdot) \cdot  e_1.
\end{align}
\end{Remark}
We now  consider  the open set
$$
\cU := \left\{u \in X_2\::\: \, \frac{ u (e_1,\cdot)  }{\n u_n(e_1) \cdot e_1} >-1 \right \},
$$
 and the mapping  
\begin{align}\label{eq:perturbed-strip-ND-Direqui}
G_\lambda: \cU   \to X_0  \times Z_1, \qquad G_\lambda(u):= F_\lambda \circ M(u)=(H_\lambda \circ M(u),  Q_{\lambda}\circ M(u)),
\end{align}  
where 
$M: \cU \subset X_2 \to X_2^{D} \times  Y_2^+ $ is defined by $Mu= (M_1 u,M_2 u)$ with 
\begin{align}\label{eq:defM}
[M_1 u]({\t},x) = u-h_u \n u_n \cdot \t, \quad [M_2 u](\t,x) = h_u. 
\end{align}
It  then follows from the Remark \eqref{eqrresolutionform}  that
\begin{align}\label{eq bifurcate}
G_\lambda(u)=0,
\end{align}
then the function 
\begin{align}\label{eq:solfinal}
M_1 u +u_n
\end{align}
solves the problem  \eqref{eq: eqfirst}  with 
\begin{align}\label{eq exprehh}
h=h_u=\frac{ u (e_1,\cdot)  }{\n u_n(e_1) \cdot e_1}. 
\end{align}
We have
$$
G_\lambda(0) = 0 \qquad \text{for all $\lambda >0$,}
$$
and  by \eqref{eq:Approx}, \eqref{eq:defM}, the definition of $Q_\lambda$ and   $K_\lambda$, we have from   \eqref{eq:spacecondi}

\begin{align}\label{eq: defQ}
Q_{\lambda}\circ M (u)&=\sqrt{1+ \lambda\frac{ |\nabla h_u|^2 }{ (1+h_u)^4} } ( 1+ h_u) \Bigl(\n u(e_1,\cdot) \cdot  e_1+\n u_n (e_1,\cdot) \cdot  e_1+(N-2)  u (e_1, \cdot)\Bigl)\nonumber\\
& -\n u_n (e_1,\cdot) \cdot  e_1.
\end{align}
\begin{Proposition}\label{eqlinearised}
The map $G_\l : \mathcal{U} \cap  X_2 \rightarrow  X_0\times Z_1$  defined by \eqref{eq:perturbed-strip-ND-Direqui} is of class $C^\infty$. Moreover for all $v\in   X_2 $,
\begin{align}\label{eequivla-ND}
D G_\lambda(0)v= \Bigl(j_{N/2-1,n}^2 v+ \lambda \Delta_{x} + \D_\t v,  \n v(e_1, \cdot )\cdot e_1+(N-1)v(e_1, \cdot)\Bigl)
\end{align}
\end{Proposition}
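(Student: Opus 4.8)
The plan is to establish smoothness first and then compute the Fréchet derivative at $u=0$ componentwise, treating $H_\lambda\circ M$ and $Q_\lambda\circ M$ separately. For smoothness, I would observe that $u\mapsto h_u = u(e_1,\cdot)/(\n u_n(e_1)\cdot e_1)$ is a bounded linear map $X_2\to C^{2,\alpha}_{p,e}(\R^m)$ (it is a trace followed by scaling), hence $C^\infty$; consequently $M_1$ and $M_2$ in \eqref{eq:defM} are affine-linear in $u$ (the term $h_u\n u_n\cdot\t$ depends linearly on $u$, since $u_n$ is fixed), so $M$ is smooth into $X_2^D\times Y_2^+$. The map $(u,h)\mapsto L_{\lambda,n}^{1+h}(u+u_n)$ is a composition of the polynomial-in-$h$ coefficient expression \eqref{eq:reldiffope-ND-Dir} with the rational function $h\mapsto h^2(x)$, etc.; since all coefficients involve $h$, $\n h$, $\Delta h$ and $(1+h)^{-1}$ with $1+h>0$, and these depend smoothly on $h\in Y_2^+$, the map $H_\lambda$ is $C^\infty$. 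Similarly $Q_\lambda$ in \eqref{eq: defQ} is a smooth function of $u$, $h_u$ and $\n h_u$ wherever $1+h_u>0$, which holds on $\cU$. Composition of $C^\infty$ maps is $C^\infty$, giving the first claim.

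For the derivative, I would differentiate each component along a direction $v\in X_2$ at $u=0$ (so $h_0=0$, $\n h_0=0$). For the first component: since $H_\lambda\circ M(u)=L_{\lambda,n}^{1+h_u}(M_1u+u_n)$ and at $u=0$ we have $M_1 0=0$, $h_0=0$, the product rule gives two contributions — the derivative of the operator $L_{\lambda,n}^{1+h}$ in the $h$-direction $D h_u\cdot v$ applied to $u_n$, plus $L_{\lambda,n}^{1}$ applied to $D(M_1 u)\cdot v$. The first contribution vanishes: by \eqref{eqreladiffopets} and Remark \ref{eqrresolutionform}, $L_{\lambda,n}^{1+h}u_n=0$ identically in $h$ (because $u_n$ pulled back is an exact solution), so its $h$-derivative is zero. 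The second contribution is $L_{\lambda,n}^1$ applied to $v - (Dh_u\cdot v)\,\n u_n\cdot\t$; but $L_{\lambda,n}^1 = \D_\t+\lambda\Delta_x+j_{N/2-1,n}^2\id$ annihilates $u_n$ and, since $(Dh_u\cdot v)(x)\,\n u_n\cdot\t$ has $x$-dependence only through the scalar $(Dh_u\cdot v)(x)$ times the fixed radial profile $\n u_n\cdot\t$ which satisfies $\D_\t(\n u_n\cdot\t)+ j_{N/2-1,n}^2(\n u_n\cdot \t)\neq 0$ in general — here I must be careful. The cleaner route is: $L_{\lambda,n}^{1+h_u}(M_1 u+u_n)=L_{\lambda,n}^{1+h_u}(U)$ where $U$ is the ansatz \eqref{eq:Approx}, and $U=u_n(\t/(1+h_u))+u+O(\|h_u\|^2)=u_n(\t/(1+h_u))+u - $ (higher order); since $L_{\lambda,n}^{1+h}u_n(\cdot/(1+h))=0$ exactly, the linearization reduces to $L_{\lambda,n}^{1}v=\D_\t v+\lambda\Delta_x v+j_{N/2-1,n}^2 v$, which is the stated first component.

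For the second component, I would linearize \eqref{eq: defQ} directly at $h_u=0$, $\n h_u=0$: the square-root factor and the $(1+h_u)$ factor both differentiate to contribute only through terms that either multiply the $O(s)$ quantity $\n u_n(e_1)\cdot e_1+(N-2)\cdot 0$ — wait, at $u=0$ the bracket equals $\n u_n(e_1,\cdot)\cdot e_1$, a nonzero constant, so the derivative of the prefactor $\sqrt{1+\cdots}(1+h_u)$ contributes $(Dh_u\cdot v)\cdot\n u_n(e_1)\cdot e_1$ (the square-root part contributes nothing since $|\n h_u|^2$ is quadratic). Adding the derivative of the bracket itself, $\n v(e_1,\cdot)\cdot e_1+(N-2)v(e_1,\cdot)$, and using $\n u_n(e_1)\cdot e_1=j_{N/2-1,n}I'_{N/2-1}(j_{N/2-1,n})$ together with \eqref{eq:foermh1} so that $(Dh_u\cdot v)\,\n u_n(e_1)\cdot e_1 = v(e_1,\cdot)$, I get total $\n v(e_1,\cdot)\cdot e_1+(N-2)v(e_1,\cdot)+v(e_1,\cdot)=\n v(e_1,\cdot)\cdot e_1+(N-1)v(e_1,\cdot)$, as claimed. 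The main obstacle is the bookkeeping in the first component: one must rigorously justify that the $h$-derivative of the operator applied to $u_n$ drops out, which is exactly the content of the identity $L_{\lambda,n}^{1+h}u_n(\cdot/(1+h))\equiv 0$ from Remark \ref{eqrresolutionform}, so it is best to phrase the ansatz-based argument to make this cancellation manifest rather than expanding \eqref{eq:reldiffope-ND-Dir} term by term.
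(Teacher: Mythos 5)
Your proposal is correct and follows essentially the same route as the paper: smoothness via linearity of $M$ and smooth dependence of the coefficients of $L^{1+h}_{\lambda,n}$ on $h$, the boundary component by direct linearization of \eqref{eq: defQ} using \eqref{eq:foermh1}, and the interior component via the identity $L^{1+h}_{\lambda,n}\bigl[u_n(\cdot/(1+h))\bigr]\equiv 0$ from Remark \ref{eqrresolutionform} (the paper phrases this as a cancellation between $\partial_u H_\lambda(0,0)M_1v$ and $\partial_h H_\lambda(0,0)h_v$, which is your ``cleaner route'' in disguise). Note only that your first attempt at the interior term --- the claim that $L^{1+h}_{\lambda,n}u_n=0$ identically in $h$ --- is false as stated, since it is the pulled-back function $u_n(\cdot/(1+h))$, not $u_n$ itself, that is annihilated; you flag and correct this yourself, so the final argument stands.
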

\begin{proof}
The proof of the first statement is achieved once we show that each the  mappings 
$u \mapsto H_\lambda \circ M (u)$ and $u\mapsto Q_{\lambda}\circ M (u) $ are  $C^\infty$. It is clear from the definition in \eqref{eq: defQ} that  $Q_\lambda$ is   $C^\infty$. Furthermore,  the  map  $H_\lambda$ is  $C^\infty$ from its definition using \eqref{eq:reldiffope-ND-Dir}. Since $M$ is linear,  the $C^\infty$-character of the map $u \mapsto H_\lambda \circ M (u)$ follows. 

We now prove \eqref{eequivla-ND}.  By a direction computation, using \eqref{eq: defQ} we find \begin{align}\label{eq diffQ}
D Q_{\lambda}(0)v=\n v(e_1, \cdot )\cdot e_1+(N-1)v(e_1, \cdot). 
\end{align}
To see (\ref{eequivla-ND}), we  differentiate \eqref{eq:approxim} to get, for fixed $h \in 
C^{2,\alpha}_{p, e}(\R^m)$ 
\begin{align}
  0 = \frac{d}{ds}\Bigl|_{s=0} \Bigl(L^{1+ sh}_\l (u_n^{sh})\Bigr) 
    &= \Bigl(\frac{d}{ds}\Bigl|_{s=0} L^{1+ sh}_{\l, n}\Bigr)u_n + L^1_{\l, n} \frac{d}{ds}\Bigl|_{s=0}u_n^{sh} \nonumber\\
  &= \Bigl(\frac{d}{ds}\Bigl|_{s=0} L^{1+ sh}_{\l, n}\Bigr)u_n -  L_{\l, n} w_h  \label{zero-diff}  
\end{align}
with $w_h(t,x)=  \n u_n \cdot \t h(x)$, where we used \eqref{eq:approxisolu} in the last step. 
By the chain rule, we now have
\begin{equation}
D(H_\lambda \circ M)(0)v=\de_u H_\l(0,0)M_1 v+ \de_h H_\l(0,0)h_v \qquad \text{for $v \in X_2$,}
\label{eq:DGl0-compt}
\end{equation}
where, since by definition $M_1 v = v - w_{h_v}$ with $w_{h_v}(t,x) =  \n u_n \cdot \t  h_v(x)$,  
$$
\de_u H_\l(0,0)M_1 v = L_{\l, n} M_1 v =L_{\l, n} v - L_{\l, n}w_{h_v}
$$
and, by (\ref{zero-diff}), 
$$
\de_h H_\l(0,0)h_v = \Bigl(\frac{d}{ds}\Bigl|_{s=0} L^{1+ sh_v}_{\l, n}\Bigr)u_n = L_{\l, n}w_{h_v}.
$$
These identities  together with  \eqref{eq:DGl0-compt}  give  $DG_\l(0)v=L_{\l, n} v$ for $v \in X_2$ as desired. 
\QED\end{proof}
\vspace*{2mm} 
\section{Analysis of the linearized operator $D G_\lambda(0)$}\label{analyis}
In this section, we analyse the operator  $D G_\lambda(0)$  given in Proposition \ref{eqlinearised} and determined its kernel  as well as the image. To proceed, we first study the solutions of the following equation
\be\label{eq:w-nuDir}
- \de_{rr}w-\frac{N-1}{r}\de_r w=\nu w, \qquad -\de_r w(1)=(N-1) w(1),
\ee
with 
$
\nu\in \R. 
$
Here  \eqref{eq:w-nuDir} is equivalent to 
\be\label{eq:wvp}
\int_{0}^1  w'(r) \vp'(r)  r^{N-1} dr  +(N-1)w(1)\vp(1) =\nu \int_{0}^1  w(r) \vp(r)  r^{N-1} dr \qquad\textrm{ for all $\vp\in  C^{1}(0,1)$},
\ee
and we deduce 
 $$\int_{0}^1((w'(r))^2-\nu (w(r))^2)r^{N-1}dr=-(N-1)w(1)^2.$$  Hence  for $\nu\leq 0$ the only bounded solutions to  \eqref{eq:w-nuDir} is $w\equiv 0$. 

In the case where $\nu>0$, we have a family of solution to the interior equation  in \eqref{eq:w-nuDir} given 
\be\label{eq:ssoluUkk}
v(r)=A \psi_\nu(r), \quad \textrm{ for some $A \in \R$.}
\ee
where  $\psi_\nu(r)=I_{N/2-1}(r\sqrt{\nu })$.

Using the boundary conditions and provided $A \ne 0$, we  see that $\nu$ must solve
\be\label{eq:satisfsolv-eigen}
\frac{\sqrt\nu J_{\b+1}(\sqrt\nu)}{J_\b(\sqrt\nu)}=2\b+ 1 \qquad\textrm{ with $\b=\frac{N}{2}-1$}. 
\ee
For $N=1$, we have $2\b+ 1=0$ and  $\frac{J_{\b+1}(x)}{J_\b(x)}=\tan x$. Hence the solutions to \eqref{eq:satisfsolv-eigen} are given by $\nu_n=n^2\pi^2$. Recall in this case that   $j_{\b,n}=\frac{2n+1}{2}\pi$.\\

We now consider the case $N\geq 2$.  It is well  known that the map  $ x\mapsto \frac{J_{\b+1}(x)}{J_\b(x)}$ is increasing, has singularities at $j_{\b,n}$, negative on the intervals $(j_{\b,n},j_{\b+1,n})$ and  positive on the interval $(j_{\b+1,n},j_{\b,n+1})$. Moreover at $x=0$, it is equal to $0 $. It follows that its graph intersects the graph of the  convex and decreasing function $x\mapsto\frac{2\b+1}{x}$,   at the values $\nu_\ell$ in the order 
\be \label{eq:valuesordered}
0<\sqrt\nu_1<\sqrt\nu_2< \dots, \quad\textrm{ with $j_{\b+1,n}<\sqrt\nu_n<j_{\b,n+1}$ for $n\geq 1$, $\sqrt\nu_1< j_{\b,1}$.} 
\ee
We also recall,   $$
 j_{\nu,n}<j_{\nu+1,n}<j_{\nu, n+1} \qquad \text{for $\nu>-1$, $n\geq 1$} 
$$
(see e.g. \cite[Chapter XV, 15.22]{NevWatson})
and  from \cite[page 68, (1.5)]{A.Elbert}
\be\label{eq:growth-eigen}
\lim_{n\to\infty}  \frac{ j_{\nu,n}}{n}=\pi.
\ee
It is not difficult to check that $\psi_{\nu_n}$ form an orthogonal basis of eigenfunctions  in $L^2((0,1), r^{N-1})$. Indeed, using  \eqref{eq:wvp} we have 
\begin{align*}
 &\int_{0}^1  \psi'_{\nu_m}(r) \psi'_{\nu_n}(r) r^{N-1}dr   +(N-1) \psi_{\nu_m}(1)\psi_{\nu_n}(1)\\
  &=\nu_m \int_{0}^1  \psi_{\nu_m}(r) \psi_{\nu_n}(r) r^{N-1}dr=\nu_n \int_{0}^1  \psi_{\nu_m}(r) \psi_{\nu_n}(r) r^{N-1}dr.
\end{align*}
Hence either $\nu_n=\nu_m$ or  $\int_{0}^1  \psi_{\nu_m}(r) \psi_{\nu_n}(r) r^{N-1}dr=0$.\\

In the following, we let $\cP \subset \cL(\R^m)$ denote the subset
of all coordinate permutations and define the spaces
\begin{align*}
&X^k_{\mathcal{P}}:= \{ u \in  X_k\::\: u(\cdot , x)=  u(\cdot, \bold{p}(x))  \text{ for all $ x \in \R^N$, $\bold{p} \in \mathcal{P} $}\},\\
&Z^1_{\mathcal{P}}:= \{ h\in   Z_1 \::\: \quad h(x)=  h(\bold{p}(x))  \text{ for all $ x \in \R^N$ $  \bold{p} \in
\mathcal{P} $}\}.
\end{align*}
We claim that $G_\lambda$ sends  $ X^2_{\mathcal{P}}  \rightarrow  X^0_{\mathcal{P}}\times Z^1_{\mathcal{P}}$. \\

Indeed, we  observe that for a function $w\in C^{2, \alpha}(\R^m)$  satisfying  $w(x)=  w(\bold{p}(x))$   for all $ x \in \R^m$ $  \bold{p} \in
\mathcal{P} $,  if we put   $\bold{p}(x_i)=x_\ell$ and write   $y_j=\bold{p}(x_j) \in \{ x_1, \cdots x_m\}\setminus \{x_\ell\}$ for $j\ne i$.  Then 
\begin{align*}
\frac{\partial w}{\partial x_i}(\bold{p}(x_1), \cdots,  \bold{p}(x_m))&=\lim_{\e\rightarrow 0}\frac{ w(y_1, \cdots, y_{i-1}, y_i+\e, y_{i+1}, \cdots, y_m)-w(x_1, \cdots,  x_m)}{\e} \nonumber\\
&=\lim_{\e\rightarrow 0}\frac{ w(z_1, \cdots, z_{i-1}, z^\e_i, z_{i+1}, \cdots, z_m)-w(x_1, \cdots,  x_m)}{\e},  
\end{align*}
where  $z^\e_i=x_\ell+\e$  and $z_j=y_j=\bold{p}(x_j) \in \{ x_1, \cdots x_m\} \setminus\{x_\ell\}$ for $j\ne i$. We have  $\{ z_1, \cdots,  z^\e_i, \cdots, z_m \}= \{ x_1, \cdots, x_m, x_\ell+\e\}\setminus\{x_\ell\}$.  Then considering  the permutation $\sigma$ defined by  $\sigma(z_\ell)=x_\ell+ \e $,  and  $\sigma(z_k)=x_k$, $k\ne \ell $, it follows  from the property $w(z_1, \cdots   z_m)=w(\sigma(z_1), \cdots   \sigma(z_m))= w(x_1, \cdots,  x_\ell+\e, \cdots, x_m)$ that 
\begin{align}\label{eq:derivepermutrule}
\frac{\partial w}{\partial x_i}(\bold{p}(x_1), \cdots,  \bold{p}(x_m))& =\frac{\partial w}{\partial x_\ell}(x_1, \cdots,  x_m)
\end{align}
Thus   \eqref{eq:derivepermutrule}  together with  \eqref{eq:reldiffope-ND-Dir}, \eqref{eq: defQ} and  \eqref{eq:perturbed-strip-ND-Direqui} allow  to see that $G_\lambda$ sends $ X^2_{\mathcal{P}}  \rightarrow  X^0_{\mathcal{P}}\times Z^1_{\mathcal{P}}$. 

Next  for $j \in \N \cup \{0\}$, we define the Sobolev spaces
$$
H^{j}_{p,e}(\O_*):= \Bigl \{v \in H^{j}_{loc}(\O_*)\::\: \text{$v$ even,
$2\pi$-periodic in $x_1,\dots,x_m$}\Bigl \},
$$
$$
H^{j}_{p, rad }(\O_*) := \Bigl \{u \in  H^{j}_{p,e}(\O_*) \::\: \textrm{u is radial in $\t$} \Bigl \},
$$
$$
H^{j}_{\mathcal{P}, rad }(\O_*) := \Bigl \{u \in H^{j}_{p, rad }(\O_*) \::\: \textrm{ $u(\cdot , x)=  u(\cdot, \bold{p}(x))$} \text{ for all $ x \in \R^N$, $\bold{p} \in \mathcal{P} $}\}
$$
and set 
$$
\cH^{j}_{\mathcal{P},rad}(  \Omega_*):= \{ u \in H^{j}_{\mathcal{P}, rad }(\O_*) \::\: \text{$\n u\cdot \t+(N-1)u=0$ on $\de \O_*$  } \}.
$$
We also consider
\begin{equation}\label{eq:def-hpes}
H^{j}_{\mathcal{P}}(\R^m) := \Bigl \{\omega \in  H^{j}_{p,e}(\R^m)  \::\: \omega(x)=  \omega(\bold{p}(x))  \text{ for all $ x \in \R^N$, $\bold{p} \in \mathcal{P} $}\}, \qquad j \in \N \cup \{0\},
\end{equation}
where 
$$
H^{j}_{p,e}(\R^m):= \Bigl \{v \in H^{j}_{loc}(\R^m) \::\: \text{$v$ even,
$2\pi$-periodic in $x_1,\dots,x_m$}\Bigl \}.
$$
We also put  $L^2_{p,e}(\R^m):= H^{0}_{\mathcal{P}}(\R^m)$. Then  $L^2_{p,e}(\R^m)$ is a
Hilbert space with scalar product
$$
(u,v) \mapsto \langle u,v \rangle_{L^2} := \int_{[0,2\pi]^m}
u(t)v(t)\,dt \qquad \text{for $u,v \in L^2_{p,e}$.}
$$
We denote the induced norm by $\|\cdot\|_{L^2}$ and  define 
\be\label{decbasis}
\omega_{ k}(x)=\sum_{j=1}^m \cos( k x_j), \quad  k \in  \N \cup
\{0\}. 
\ee
Then the  family  $\widetilde{\omega}_k:=\frac{\omega_{k}}{\|\omega_k\|_{L^2}}$ forms an
orthonormal basis for $L^2_{p,e}(\R^m).$  

We set 
\be \label{eq:ei}
 \l_n:= {j_{N/2-1,n}^2-\nu_1 } 
\ee
and 
\be 
L_{\l_n,n}: =\D_t +\l_n \Delta_{x} +j_{N/2-1,n}^2 id.
\ee
With this  there holds
\begin{Lemma}\label{eq:lemmkerimG}
\begin{itemize}
\item[(i)]
Let $k,\ell \in \N$ be such that  $j_{\b,n}^2-\l_n k^2=\nu_\ell$. Then $k=1$ and $\ell=1$.
\item[(ii)]
Moreover, the non-trivial solution $v\in \cH^{2}_{\mathcal{P},rad}(  \Omega_*)$ to $L_{\l_n,n} v=0$ in $\O_*$  is given by 
$v(t)=A v_*$, for some $A\in \R^*_+$, where 
\be
v_*(t,x):=\psi_{\nu_1}(|t|)\vartheta(x), 
\ee 
with 
$$\vartheta(x):=\cos(x_1)+\cdots+\cos(x_m).$$ 
\item[(iii)]
The image of the linear map $DG_{\l_{n}}(0): X^2_{\mathcal{P}}  \rightarrow  X^0_{\mathcal{P}}\times Z^1_{\mathcal{P}}$ is given by 
\be\label{eq:inclusion}
Im\Big(DG_{\l_{n}}(0)\Big)= E_{\nu_1}^{\perp},
\ee
where 
\be\label{cokernel2}
E_{\nu_1}^{\perp} := \left\lbrace (w, h)\in  X^0_{\mathcal{P}}\times Z^1_{\mathcal{P}}: \int_{\Omega_*}   w (t, x)   v_*(t, x) \,dxdt- \int_{ \partial \Omega_*} h(x) v_*(t, x) dt dx=0 \right\rbrace.
\ee
\end{itemize}

\end{Lemma}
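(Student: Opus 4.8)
The plan is to diagonalise the problem in the $x$--variables, reducing it to the one--dimensional problems of the type \eqref{eq:w-nuDir} already analysed above. Write $\beta=N/2-1$. Expanding $v\in X^2_{\mathcal{P}}$ in the orthonormal basis $\{\widetilde{\omega}_k\}_{k\ge 0}$ of $L^2_{p,e}(\R^m)$ from \eqref{decbasis}, and using $-\Delta_x\omega_k=k^2\omega_k$, one has $v(t,x)=\sum_{k\ge 0}a_k(|t|)\,\omega_k(x)$; for $v\in\cH^{2}_{\mathcal{P},rad}(\Omega_*)$ the equation $L_{\lambda_n,n}v=0$ together with the boundary relation $\partial_\eta v+(N-1)v=0$ on $\partial\Omega_*$ (which is precisely the vanishing of the second component of $DG_{\lambda_n}(0)v$) is equivalent to the family
$$
a_k''+\tfrac{N-1}{r}\,a_k'+\mu_k\,a_k=0\ \ \text{on }(0,1),\qquad -a_k'(1)=(N-1)\,a_k(1),\qquad \mu_k:=j_{\beta,n}^2-\lambda_n k^2 ,
$$
i.e. to \eqref{eq:w-nuDir} with $\nu=\mu_k$. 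Hence, by the analysis following \eqref{eq:w-nuDir}--\eqref{eq:satisfsolv-eigen}, $a_k\equiv 0$ unless $\mu_k>0$ and $\mu_k\in\{\nu_\ell\}_\ell$, in which case $a_k$ is a scalar multiple of $\psi_{\mu_k}$.

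Statement (i) would then follow from a short arithmetic step: since $\lambda_n=j_{\beta,n}^2-\nu_1$, we have $\nu_1=j_{\beta,n}^2-\lambda_n$, so the assumed identity $j_{\beta,n}^2-\lambda_n k^2=\nu_\ell$ together with $\nu_\ell\ge\nu_1$ (minimality, \eqref{eq:valuesordered}) and $\lambda_n>0$ forces $k^2\le 1$, hence $k=1$, and then $\nu_\ell=\nu_1$, i.e. $\ell=1$. For use in (ii)--(iii) I would also record that $\mu_0=j_{\beta,n}^2$ is never one of the $\nu_\ell$ (at $\sqrt\nu=j_{\beta,n}$ the left side of \eqref{eq:satisfsolv-eigen} has a pole), and that for $k\ge 2$ one has $\mu_k\le j_{\beta,n}^2-4\lambda_n=4\nu_1-3\,j_{\beta,n}^2<\nu_1$ (using $\nu_1<j_{\beta,1}^2\le j_{\beta,n}^2$ from \eqref{eq:valuesordered}), so $\mu_k$ is either non-positive or lies in $(0,\nu_1)$, and in either case is not among the $\nu_\ell$. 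Consequently, in the decomposition of any $v\in\cH^{2}_{\mathcal{P},rad}(\Omega_*)$ with $L_{\lambda_n,n}v=0$ only the mode $k=1$ can survive, with $a_1=A\,\psi_{\nu_1}$, whence $v=A\,\psi_{\nu_1}(|t|)\,\omega_1(x)=A\,v_*$; conversely $v_*\in\cH^{2}_{\mathcal{P},rad}(\Omega_*)$ and $L_{\lambda_n,n}v_*=0$, because $\psi_{\nu_1}$ solves the interior equation in \eqref{eq:w-nuDir} (see \eqref{eq:ssoluUkk}) with $\nu=\nu_1$, satisfies the Robin condition by \eqref{eq:satisfsolv-eigen}, and $\mu_1=\nu_1$. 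This gives (ii).

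For (iii) I would prove the two inclusions. For ``$\subseteq$'', take $v\in X^2_{\mathcal{P}}$ and set $(w,h):=DG_{\lambda_n}(0)v=\bigl(L_{\lambda_n,n}v,\ \nabla v(e_1,\cdot)\cdot e_1+(N-1)v(e_1,\cdot)\bigr)$. Green's identity in the $t$--variables (there is no boundary contribution from the $x$--variables, by $2\pi$--periodicity) gives
$$
\int_{\Omega_*}w\,v_*=\int_{\Omega_*}v\,\bigl(L_{\lambda_n,n}v_*\bigr)+\int_{\partial\Omega_*}\bigl(\partial_\eta v\,v_*-v\,\partial_\eta v_*\bigr);
$$
since $L_{\lambda_n,n}v_*=0$ by (ii), and on $\partial\Omega_*$ one has $\partial_\eta v_*=\psi_{\nu_1}'(1)\vartheta=-(N-1)\,v_*$ by \eqref{eq:w-nuDir}, the right-hand side equals $\int_{\partial\Omega_*}\bigl(\partial_\eta v+(N-1)v\bigr)v_*=\int_{\partial\Omega_*}h\,v_*$, i.e. $(w,h)\in E_{\nu_1}^{\perp}$. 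For ``$\supseteq$'', given $(w,h)\in E_{\nu_1}^{\perp}$ I would split off the $\omega_1$--mode, $w=w^\flat+w_1(|t|)\,\omega_1$ and $h=h^\flat+h_1\,\omega_1$, with $w^\flat,h^\flat$ orthogonal to $\omega_1$ in the $x$--variables. On the $\omega_1$--free subspace, the map $v^\flat\mapsto\bigl(L_{\lambda_n,n}v^\flat,\ \partial_\eta v^\flat+(N-1)v^\flat\bigr)$ is Fredholm of index $0$ (being, up to an invertible factor, an identity-minus-compact operator) and, by (i) and the remarks above, has trivial kernel; hence it is boundedly invertible — \eqref{eq:growth-eigen} guarantees the relevant spectral gap — and elliptic (Schauder) estimates give the solution $v^\flat\in X^2_{\mathcal{P}}$. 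For the $\omega_1$--mode one solves the ODE $a_1''+\tfrac{N-1}{r}a_1'+\nu_1 a_1=w_1$, $a_1'(1)+(N-1)a_1(1)=h_1$; as $\mu_1=\nu_1$ is an eigenvalue of \eqref{eq:w-nuDir}, Fredholm's alternative \cite[Theorem 2.3]{Kress} makes this solvable precisely under the compatibility condition obtained by testing against $\psi_{\nu_1}$, which — by the same integration by parts as above restricted to the first mode — is exactly the relation $(w,h)\in E_{\nu_1}^{\perp}$. Then $v:=v^\flat+a_1(|t|)\,\omega_1\in X^2_{\mathcal{P}}$ satisfies $DG_{\lambda_n}(0)v=(w,h)$, so $\mathrm{Im}\,DG_{\lambda_n}(0)=E_{\nu_1}^{\perp}$.

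I expect the real difficulty to lie in the ``$\supseteq$'' part of (iii): showing that the $\omega_1$--free Robin problem for $L_{\lambda_n,n}$ is well posed in $C^{2,\alpha}$ (Fredholmness, the kernel computation via (i), then Schauder regularity), and checking that the $k=1$ Fredholm compatibility condition coincides, constants included, with the single linear functional defining $E_{\nu_1}^{\perp}$. By contrast, (i) and the solvability analysis of \eqref{eq:w-nuDir} are elementary once the ordering \eqref{eq:valuesordered} of the $\nu_\ell$ is in hand.
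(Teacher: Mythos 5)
Your proposal is correct, and for parts (i), (ii) and the inclusion $Im\big(DG_{\l_n}(0)\big)\subseteq E_{\nu_1}^{\perp}$ it coincides with the paper's argument: the same Fourier decomposition in $\widetilde\omega_k$, the same reduction to the Robin ODE \eqref{eq:w-nuDir}, the same arithmetic $k^2=(j_{\b,n}^2-\nu_\ell)/(j_{\b,n}^2-\nu_1)\le 1$, and your Green's identity for the forward inclusion is just the paper's statement $\cB(U,v_*)=0$ unwound. The only genuine divergence is in the reverse inclusion $E_{\nu_1}^{\perp}\subseteq Im\big(DG_{\l_n}(0)\big)$. The paper works globally: it keeps all modes together, establishes the near-coercivity estimate \eqref{near-coerciv} from the growth of $\s_{k,\ell}$, and applies Lax--Milgram together with the Fredholm alternative for compact bilinear forms \cite[Theorem 2.3]{Kress} on the projected space $P H^{1}_{p,rad}(\O_*)$, where $P$ removes only the one-dimensional span of $v_*$; regularity is then recovered by elliptic theory. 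You instead split off the entire $\omega_1$-component, invert the operator on the $\omega_1$-free subspace (where your sharpened version of (i) — $\mu_0=j_{\b,n}^2$ is never a $\nu_\ell$, and $\mu_k<\nu_1$ for $k\ge 2$ — guarantees a spectral gap), and treat the resonant $k=1$ mode as a one-dimensional Robin ODE at the eigenvalue $\nu_1$, solvable by the classical Fredholm alternative exactly under the single compatibility condition defining $E_{\nu_1}^{\perp}$. Your route is more explicit and makes it transparent that the codimension of the image is one and where the obstruction lives; the paper's route is more compact and avoids having to justify bounded invertibility mode-by-mode. Both rest on the same spectral facts \eqref{eq:valuesordered} and \eqref{eq:growth-eigen}, and your sketch of the invertibility on the $\omega_1$-free part (index-zero Fredholmness plus trivial kernel, then Schauder estimates) is at the same level of detail as the paper's own treatment, so I see no gap.
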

\begin{proof}
(i) Let $k,\ell \in \N$ be such that  $j_{\b,n}^2-\l_n k^2=\nu_\ell $. Then  $k\not=0$ because  non of the $\sqrt{\nu_\ell}$  given by  \eqref{eq:valuesordered} is a zero of $J_\b$. Furthermore, if  $j_{\b,n}^2-\l_n k^2=\nu_\ell$ then
\be
k^2=\frac{j_{\b,n}^2-\nu_\ell}{j_{\b,n}^2-\nu_1} \leq \frac{j_{\b,n}^2-\nu_1 }{j_{\b,n}^2-\nu_1} =1.
\ee
Hence $k=1$ and   $\ell=1$. 

(ii)
Write $v(t)=\sum_k v_k(|t|)\widetilde{\omega}_k(x)$.  Recalling 
\begin{align}\label{kerneleq}
D G_\lambda(0)v= \Bigl(j_{N/2-1,n}^2 v+ \lambda \Delta_{x} v + \D_\t v,  \n v(e_1, \cdot )\cdot e_1+(N-1)v(e_1, \cdot)\Bigl), 
\end{align}
The  equation  $\cL_n v=0$  with  $v\in \cH^{2}_{\mathcal{P},rad}(  \Omega_*)$ 
implies that the coefficients  $v_k$ solve  \eqref{eq:w-nuDir} and  from \eqref{eq:ssoluUkk} \eqref{eq:satisfsolv-eigen}, $v_k\ne0$ if and only if 
\be
j_{\b,n}^2-\l_n k^2=\nu_\ell, \qquad\textrm{ for some $\ell\in \N$.} 
\ee
Then (i) implies that this is possible only when  $k=1$ and $\ell=1$.  We thus get $v_k\equiv 0$ for all $k\not=1$ and  $v_1$ is clearly proportional  to the eigenfunction $\psi_{\nu_1}$.

(iii)
Let  $(w,h)\in  Im\Big(DG_{\l_n}(0)\Big) \subseteq  X^0_{\mathcal{P}}\times Z^1_{\mathcal{P}}.$  Then there exists $U \in  X^2_{\mathcal{P}}$ such that 
\begin{equation}\label{eq:diffeq1}
DG_{\l_n}(0)U= (w,h).
\end{equation} 
That is equivalent to 
\begin{equation}\label{eq:diffeq33}
DG_{\l_n}(0)U = (w,h)\Longleftrightarrow \left\{\begin{aligned}
\D_{\t}U + \lambda_n \partial_{xx}U +  j_{\b,n}^2 U&=w \quad\text{in}\quad \O_*\\
\n U(e_1,x)\cdot e_1+(N-1)U &= h\quad\text{on}\quad \partial\O_*. 
 \end{aligned}
\right.
\end{equation}
We define $\cB: H^{1}_{p, rad }(\O_*) \times H^{1}_{p, rad }(\O_*)\to \R$ 
\be\label{bilieanrform}
\cB(u,v) =\int_{\O_*} [\n_t u \cdot \n_t v+\l_n\de_x   u \de_x v]-   j_{\b,n}^2\int_{\O_*} uv+(N-1)\int_{\de \O_*} uv.
\ee
Multiply  \eqref{eq:diffeq33} by $\vp\in C^1(\ov\O_*) $ and integrate by parts to have 
$$
\cB(U,\vp)=-\int_{\O_*}w\vp +\int_{\de\O_*} h\vp 
$$
It is clear that  $\cB(U,v_*)=0$ and  we  immediately  deduce   $-\int_{\O_*}w v_* +\int_{\de\O_*} h v_*=0 $, so that 
\be
Im\Big(DG_{\l_{n}}(0)\Big) \subseteq  E_{\nu_1}^{\perp}. 
\ee 
We prove next the other inclusion. 
Set $\psi_{n}:=\frac{\psi_{\nu_n}}{\|\psi_{\nu_n}\|_{L^2(B_1)}}$, where $\nu_m$ are given by \eqref{eq:valuesordered} and  $\psi_{\nu_m}$ solve \eqref{eq:w-nuDir} together  with  \eqref{eq:satisfsolv-eigen}. 
Writing, 
\begin{equation}\label{eqdecomp00}
u(t,x)=\sum_{k,\ell\in \N } u_{k,\ell}  \psi_{\ell}(|t|)\widetilde{\omega}_k(x), 
\end{equation}
 the norm 
$\|u\|_{H^{1}_{p, rad }(\O_*)}$ is equivalent to $\sum_{k,\ell}(1+\ell^2+k^2)u_{\ell,m}^2$. 
Furthermore, it is clear from  \eqref{bilieanrform} that 
\begin{align*}
\cB(u,u)\geq  \sum_{ k, \ell\in \N  } \s_{k,\ell}u_{k,\ell}^2 
\end{align*}
where 
\be \label{eq:sigma}
{\s}_{k,\ell}:= \nu_\ell- j_{\b,n}^2+k^2\l_n. 
\ee
Note that the set of $(\ell,k)\in \N\times \N$ such that  $\s_{k,\ell}\leq 0$ is finite. 
In addition   \eqref{eq:valuesordered}  and \eqref{eq:growth-eigen} show that   $\s_{k, \ell}\backsim \ell^2+k^2$ as $\ell^2+k^2 \rightarrow \infty$.  

We then get  positive constants $C,c>0$ such that    
\begin{align}\label{near-coerciv}
\cB(u,u) \geq  C \|u\|_{H^{1}_{p, rad }(\O_*)}^2- c\|u\|_{L^2(\O_*)}^2,  \forall u\in H^{1}_{p, rad }(\O_*),
\end{align}

Note that by the compact embedding of  $H^{1}_{p, rad }(\O_*)$ into $L^2(\O_*)$ , the symmetric bilinear operator 
\be
B(u,v) =\int_{\O_*} vu 
\ee
is compact on $H^{1}_{p, rad }(\O_*)$. In addition  by \eqref{near-coerciv},  for large $\delta>0$, the bounded symmetric  bilinear form $\cB+\delta B:P H^{1}_{p, rad }(\O_*)\times P  H^{1}_{p, rad }(\O_*)\to \R$ is strictly positive definite and thus strictly nondegenerate by the Lax-Milgram thorem, where $P:L^2(\O_*)\to L^2(\O_*)$ denotes the $L^2(\O_*)$-orthogonal projection on $\la v_*\ra^\perp$.  It follows that  $\cB: P H^1_{p, rad }(\O_*)\times P H^1_{p, rad }(\O_*)\to \R$ satisfies the Fredholm's alternative, see \cite[Theorem 2.3]{Kress}. Therefore,
letting  $\ell\in (H^{1}_{p, rad }(\O_*))'$ be given  by 
$$
\ell(\vp):= -\int_{\O_*} w\vp+\int_{\de\O_*} \vp h
$$  
then either
\begin{enumerate}
\item   there exists a unique  $v\in  P H^{1}_{p, rad }(\O_*)$ such that  $ \cB(v, \cdot )=\ell(\cdot)$ 
\item  or the equation $\cB(v,\cdot)=0$ admits  a nontrivial solution  $v\in P H^{1}_{p, rad }(\O_*)$. 
\end{enumerate}
 Since  $(ii)$ is impossible because   $\cB (v,\cdot)=0$ if and only if $v=A v_*$ for some $A\in \R$, we then have $(i)$:  there exists a unique  $v\in  P H^{1}_{p, rad }(\O_*)$ such that 
 $$
 \cB(v, \vp)=\ell(\vp)\qquad\textrm{ for all $ \vp \in P  H^{1}_{p, rad }(\O_*))$}
 $$
 Now for $\vp\in H^{1}_{p, rad }(\O_*)$, we have  $\vp=P\vp+(id-P) \vp=P\vp+\t v_*$, for some $\t\in \R$.  Since $(w,h)\in E_{\nu_1}^{\perp}$, we have $\ell(v_*)=0$ and thus, recalling that $\cB(v_*,\cdot)=0$, we obtain  
 $$
 \cB(v, \vp)=\ell(\vp)\qquad\textrm{ for all $\vp\in H^{1}_{p, rad }(\O_*)$.} 
 $$
 Now by elliptic regularity theory, we have that  $v\in C^{2,\a}(\ov\O_*)$ as soon as  $(w,h)\in C^{0,\a}(\ov\O_*)\times C^{1,\a}(\partial \O_*).$ Furthermore by uniqueness, it follows from  \eqref{eq:diffeq33} that  
$v\in H^{2}_{\mathcal{P}, rad }(\O_*)$ since   $(w,h)\in H^{0}_{\mathcal{P}, rad }(\O_*)\times \in H^{1}_{\mathcal{P} }(\R^m).$ Consequently, $v\in  X^2_{\mathcal{P}} = H^{2}_{\mathcal{P}}(\O_*)  \cap C^{2,\alpha}(\overline \Omega_*)$ and
\be
E_{\nu_1}^{\perp} \subseteq Im\Big(DG_{\l_{n}}(0)\Big), 
\ee
as desired.
\QED
\end{proof}

We   can  now summarise the previous analysis as follows. 
\begin{Proposition}\label{propCR-ND-Dir}
We have the following properties.
\begin{itemize}
\item[(i)] The kernel $N(\cL _n)$ of $\cL _n: =D G_{\lambda_n}(0): X^2_{\mathcal{P}}  \rightarrow  X^0_{\mathcal{P}}\times Z^1_{\mathcal{P}}$  is spanned  by $v_*(t,x)= \psi_{\nu_1}(|t|)\vartheta(x)$,
with 
$$\vartheta(x):=\cos(x_1)+\cdots+\cos(x_m).$$ 
\item[(ii)] The range of $\cL_n $ is given by
$$
R(\cL_n )= E_{\nu_1}^{\perp}.
$$
\item[(iii)]  Moreover,
\begin{equation}
  \label{eq:transversality-cond4-ND-Dir}
\partial_\lambda \Bigl|_{\lambda=\l_n }G_{\l}(0)(v_*)\not  \in \; R(\cL _n).\\
\end{equation}
\end{itemize}
\end{Proposition}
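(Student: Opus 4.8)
The plan is to obtain (i) and (ii) essentially for free from Lemma~\ref{eq:lemmkerimG}, and to settle the transversality condition (iii) by differentiating the explicit linearization \eqref{eequivla-ND} in $\lambda$. For (i): by Proposition~\ref{eqlinearised}, a function $v \in X^2_{\mathcal{P}}$ belongs to $N(\cL_n) = N(DG_{\l_n}(0))$ if and only if $L_{\l_n,n} v = 0$ in $\O_*$ and $\n v(e_1,\cdot)\cdot e_1 + (N-1)v(e_1,\cdot) = 0$ on $\de\O_*$; the latter condition is precisely membership in $\cH^{2}_{\mathcal{P},rad}(\O_*)$, so Lemma~\ref{eq:lemmkerimG}(ii) applies and yields $v = A v_*$ with $v_*(t,x) = \psi_{\nu_1}(|t|)\vartheta(x)$. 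I would also record that $v_*$ indeed lies in $X^2_{\mathcal{P}}$ (it is radial in $t$, and even, $2\pi$-periodic and permutation-invariant in $x_1,\dots,x_m$), and that, since $\psi_{\nu_1}$ satisfies the boundary relation in \eqref{eq:w-nuDir} and $\l_n = j_{N/2-1,n}^2-\nu_1$ by \eqref{eq:ei}, it solves $L_{\l_n,n}v_*=0$ and lies in $\cH^{2}_{\mathcal{P},rad}(\O_*)$. Part (ii) is then just a restatement of Lemma~\ref{eq:lemmkerimG}(iii), namely $R(\cL_n) = Im(DG_{\l_n}(0)) = E_{\nu_1}^{\perp}$.

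For (iii), I would note that by \eqref{eequivla-ND} the operator $DG_\l(0)$ depends affinely on $\l$, so that $\partial_\l DG_\l(0) v = (\Delta_x v, 0)$ for all $v \in X^2_{\mathcal{P}}$, independently of $\l$ --- this is well defined on the permutation-invariant spaces since $\Delta_x$ commutes with coordinate permutations and preserves parity and periodicity. Evaluating at the kernel generator and using $\Delta_x \vartheta = -\vartheta$ then gives $\partial_\l\big|_{\l=\l_n} G_\l(0)(v_*) = (\Delta_x v_*, 0) = (-v_*, 0)$. To conclude I would test the relation \eqref{cokernel2} defining $E_{\nu_1}^{\perp}$ against $(w,h) = (-v_*,0)$: the boundary integral vanishes because $h\equiv 0$, while the interior integral equals $-\int_{\O_*} v_*^2\,dx\,dt = -\|v_*\|_{L^2(\O_*)}^2 < 0$ since $v_* \not\equiv 0$. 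Hence $(-v_*,0) \notin E_{\nu_1}^{\perp} = R(\cL_n)$, which is exactly \eqref{eq:transversality-cond4-ND-Dir}.

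I do not anticipate a real obstacle here: once \eqref{eequivla-ND} is in hand, part (iii) reduces to the eigenvalue identity $\Delta_x\vartheta=-\vartheta$ together with the strict positivity of $\langle v_*, v_*\rangle_{L^2(\O_*)}$, and parts (i)--(ii) are mere bookkeeping on top of Lemma~\ref{eq:lemmkerimG}. The only points I would be careful about are: verifying that $v_*$ satisfies \emph{both} conditions defining $N(\cL_n)$ --- in particular the Robin-type boundary relation, which is where the normalization $\l_n = j_{N/2-1,n}^2-\nu_1$ and the boundary condition in \eqref{eq:w-nuDir} are used --- and making sure that the bilinear pairing in \eqref{cokernel2} is exactly the one produced by the weak formulation used in the proof of Lemma~\ref{eq:lemmkerimG}(iii), so that failing this pairing is equivalent to not lying in $R(\cL_n)$.
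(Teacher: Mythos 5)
Your proof is correct and follows essentially the same route as the paper: parts (i) and (ii) are read off from Lemma~\ref{eq:lemmkerimG} once the kernel condition from Proposition~\ref{eqlinearised} is identified with membership in $\cH^{2}_{\mathcal{P},rad}(\Omega_*)$, and part (iii) is obtained by differentiating \eqref{eequivla-ND} in $\lambda$ and using $\Delta_x\vartheta=-\vartheta$ to get $\partial_\lambda DG_\lambda(0)v_*=(-v_*,0)$. The only difference is that you make explicit the final step the paper leaves implicit --- testing $(-v_*,0)$ against the defining integral of $E_{\nu_1}^{\perp}$ and observing that it yields $-\|v_*\|_{L^2(\O_*)}^2<0$ --- which is a welcome clarification rather than a departure.
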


\begin{proof}
$ (i)$   and (ii)  obviously follow from Lemma \ref{eq:lemmkerimG}. To get (iii), we use \eqref{eequivla-ND}  and find 
$$
\de_\l\big|_{\l=\l_{n}} DG_{\l }(0) v_* = (\Delta_{x}v_*, 0)=(-v_*, 0).
$$
The  proof is complete.
   \QED
\end{proof}

\section{Proof of Theorem \ref{Theo1-ND}  }\label{eq:ProofTheo1-ND}
The proof of Theorem  \ref{Theo1-ND}   is achieved  by applying 
the Crandall-Rabinowitz Bifurcation theorem to solve the equation
\begin{align}\label{eq:maptsolvG}
 G_\lambda(u)=0,
\end{align}
where   $G_{\l}: \cU  \cap X^2_{\mathcal{P}}  \rightarrow  X^0_{\mathcal{P}}\times Z^1_{\mathcal{P}}$  is  defined by 
\eqref{eq:perturbed-strip-ND-Direqui}. 

\begin{Theorem}\label{Theo1-general-ND}
For every $n\in \N$, there  exist  ${\e_n}>0$ and a smooth curve
$$
(-{\e_n},{\e_n}) \to   (0,+\infty) \times  X^2_{\mathcal{P}},\qquad s \mapsto (\lambda_n(s),\varphi^n_s)
$$
with
$\lambda_n(0)= \l_{n}$, $ \varphi_n(0)\equiv 0$ such that  
\be
G_{\l_n(s)}(\varphi_n(s)) =0.
\ee
Moreover, $\varphi^n_s = s(v_{*}+\o_n(s)),$ with a smooth curve 
$$
(-{\e_n},{\e_n}) \to   X^2_{\mathcal{P}}, \qquad s \mapsto \o_n(s)
$$ satisfying  $ \o_n(0) =0$ and 
\begin{align*}
\int_{\O_*}  \o_n(s) (\t,x)  v_{*}(\t,x)\,dxd\t=0,
\end{align*}
where
$$v_{*} (\t,x)=  I_{N/2-1} (|\t| \sqrt{\nu_1} )(\cos(x_1)+\cdots+\cos(x_m)).$$   
In addition, setting
 \begin{equation}\label{solutinter}
u_s(\t, x):=  \varphi^n_s(\t,x) -\frac{|\t|}{I_{N/2-1}'(j_{N/2-1,n})} I_{N/2-1}'(j_{N/2-1,n}|\t|)  \varphi^n_s(e_1,x)  + u_n(\t, x)
 \end{equation}
and  
 \begin{equation}\label{eexpreh}
 h _{\varphi^n_s}(x):= \frac{1}{ j_{N/2-1,n} I_{N/2-1}'(j_{N/2-1,n})}    \varphi^n_s(e_1,x), 
 \end{equation}
 the function
\begin{align}\label{eqans2-ND33}
U_s(t,x)=u_s((1+h _{\varphi^n_s})t,x)
\end{align} satisfies 
\begin{equation}\label{eq:solved-ND1}
  \left \{
    \begin{aligned}
         \lambda_n(s) \de_{xx} U _s +\D_t  U _s+  j^2_{N/2-1,n} U _s &=0&& \qquad \text{in $ \Omega_{  {1+h _{\vp^n_s}} }$,}\\
             U_s&=0 &&\qquad \text{on $\partial \Omega_{ 1+h _{\vp^n_s}}$,}\\
              |\n U _s| &=j_{N/2-1,n} I'_{N/2-1}(j_{N/2-1,n})  &&\qquad \text{on $\partial  \Omega_{ 1+ h _{\vp^n_s}}$}.
    \end{aligned}
       \right.
\end{equation}
\end{Theorem}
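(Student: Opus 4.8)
The plan is to apply the Crandall--Rabinowitz bifurcation theorem (stated in Section~\ref{eq: Cradal Rabi1}) to the map $G_\lambda$ at the parameter value $\lambda=\lambda_n$. Proposition~\ref{eqlinearised} already provides the smoothness of $G_\lambda$ and the formula for $DG_\lambda(0)$, and $G_\lambda(0)=0$ for all $\lambda$. Proposition~\ref{propCR-ND-Dir} furnishes the three hypotheses of the theorem: the kernel of $\cL_n:=DG_{\lambda_n}(0)$ is one-dimensional, spanned by $v_*(\t,x)=\psi_{\nu_1}(|\t|)\vartheta(x)$; the range is the closed codimension-one subspace $E_{\nu_1}^\perp$; and the transversality condition $\partial_\lambda|_{\lambda=\lambda_n}DG_\lambda(0)v_*=(-v_*,0)\notin R(\cL_n)$ holds. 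Crandall--Rabinowitz then yields $\e_n>0$ and a smooth curve $s\mapsto(\lambda_n(s),\varphi^n_s)\in(0,\infty)\times X^2_{\mathcal P}$ with $\lambda_n(0)=\lambda_n$, $\varphi^n_0=0$, $G_{\lambda_n(s)}(\varphi^n_s)=0$, and of the form $\varphi^n_s=s(v_*+\omega_n(s))$ with $\omega_n(0)=0$ and $\omega_n(s)$ in a fixed complement of $\spann\{v_*\}$; taking that complement to be the $L^2$-orthogonal complement gives the stated integral normalization $\int_{\Omega_*}\omega_n(s)v_*\,dxd\t=0$.

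Next I would unwind the definitions to translate $G_{\lambda_n(s)}(\varphi^n_s)=0$ back into the overdetermined problem. Recall from \eqref{eq bifurcate}--\eqref{eq exprehh} that $G_\lambda(u)=0$ forces $M_1u+u_n$ to solve \eqref{eq: eqfirst} with $h=h_u=\varphi^n_s(e_1,\cdot)/(\n u_n(e_1)\cdot e_1)$, which by \eqref{constc1} equals $\varphi^n_s(e_1,\cdot)/\bigl(j_{N/2-1,n}I'_{N/2-1}(j_{N/2-1,n})\bigr)$; this is precisely \eqref{eexpreh}. Likewise $[M_1u](\t,x)=u-h_u\,\n u_n\cdot\t$, and since $u_n(\t,x)=I_{N/2-1}(j_{N/2-1,n}|\t|)$ one computes $\n u_n\cdot\t=j_{N/2-1,n}|\t|\,I'_{N/2-1}(j_{N/2-1,n}|\t|)\cdot(\t/|\t|)\cdot\t/|\t|$, so $M_1\varphi^n_s+u_n$ matches the function $u_s$ in \eqref{solutinter}. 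By construction (the content of \eqref{eq: eqfirst}) $u_s$ solves \eqref{eq:perturbed-strip-ND-Direqui1} with $h$ replaced by $1+h_{\varphi^n_s}$, and then the rescaling $U_s(t,x)=u_s((1+h_{\varphi^n_s})t,x)$ in \eqref{eqans2-ND33} together with \eqref{eqreladiffopets} and the Neumann computation \eqref{eq:derfin} shows that $U_s$ solves the Dirichlet equation $\lambda_n(s)\de_{xx}U_s+\D_tU_s+j^2_{N/2-1,n}U_s=0$ on $\Omega_{1+h_{\varphi^n_s}}$, vanishes on the boundary, and satisfies $|\n U_s|=j_{N/2-1,n}I'_{N/2-1}(j_{N/2-1,n})$ there (the third condition of \eqref{eq:perturbed-strip-ND-Direqui1} being exactly the constancy of the normal derivative once $U_s=0$ on the boundary, as in \eqref{eq:derfin}). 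This is \eqref{eq:solved-ND1}.

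The main obstacle is the verification of the three Crandall--Rabinowitz hypotheses, but these are already packaged into Proposition~\ref{propCR-ND-Dir}, so at the level of this theorem the only real work is the bookkeeping: checking that $\cU\cap X^2_{\mathcal P}$ is an open neighborhood of $0$, that the restriction of $G_\lambda$ to the permutation-invariant subspaces is well defined (done in Section~\ref{analyis} via \eqref{eq:derivepermutrule}) and has the claimed Fredholm properties, and that the bifurcating solutions are genuinely nontrivial (i.e.\ $h_{\varphi^n_s}\not\equiv\text{const}$ for $s\neq0$, so the domains $\Omega_{1+h_{\varphi^n_s}}$ are not cylinders). The last point follows because $\varphi^n_s=sv_*+o(s)$ with $v_*(e_1,x)=\psi_{\nu_1}(1)\vartheta(x)$ and $\psi_{\nu_1}(1)=I_{N/2-1}(\sqrt{\nu_1})\neq0$, so $h_{\varphi^n_s}$ has a nontrivial $\vartheta$-component for small $s\neq0$; one should also shrink $\e_n$ if needed to keep $1+h_{\varphi^n_s}$ strictly positive (guaranteed by $\varphi^n_s\in\cU$) and to keep $\lambda_n(s)>0$.

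Finally, to close the loop I would record that the curve in Theorem~\ref{Theo1-ND} and the asymptotic expansions stated there are read off from this construction: setting $\mu^n_s:=j^2_{N/2-1,n}/\lambda_n(s)$ (so $\mu_n=j^2_{N/2-1,n}/\lambda_n=j^2_{N/2-1,n}/(j^2_{N/2-1,n}-\nu_1)$, matching Remark~\ref{eq:RKmain-ND}), and $h^n_s:=\bigl(1+h_{\varphi^n_s}\bigr)\big/\sqrt{\lambda_n(s)}\cdot\sqrt{\mu^n_s}\cdot(\text{normalization})$ as dictated by the scaling $w^\lambda(t,x)=v(t/\sqrt\lambda,x)$ from \eqref{eq:changsoluD}; the leading-order term $\kappa_n\sqrt{\mu^n_s}$ comes from the constant $1$ in $1+h_{\varphi^n_s}$, and the $s\beta_n\vartheta(x)$ term from the linear part $sv_*(e_1,x)/(j_{N/2-1,n}I'_{N/2-1}(j_{N/2-1,n}))$, giving $\beta_n$ as in the remark; similarly \eqref{eq:w-s-expansion1} follows by plugging $\varphi^n_s=sv_*+o(s)$ into \eqref{solutinter}--\eqref{eqans2-ND33} and using $\n u_n\cdot\t$ proportional to $|\t|U_n'(|\t|)$, which produces the $\delta_n|t|U_n'(|t|)$ term with $\delta_n$ as stated. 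These are routine expansions and I would not belabor them.
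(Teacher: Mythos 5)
Your proof is correct and follows essentially the same route as the paper: invoke Crandall--Rabinowitz via Propositions~\ref{eqlinearised} and~\ref{propCR-ND-Dir}, then unwind $G_{\lambda_n(s)}(\varphi^n_s)=0$ through \eqref{eq:solfinal}, \eqref{eq:defM}, \eqref{constc1}, and \eqref{eqans1} to recover \eqref{solutinter}--\eqref{eq:solved-ND1}. The additional remarks on nontriviality and the link to Theorem~\ref{Theo1-ND} are harmless elaborations consistent with the paper's subsequent ``Proof of Theorem~\ref{Theo1-ND} (completed)''.
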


\begin{proof}
 We consider the smooth  map  $G_{\l_n}: \cU  \cap X^2_{\mathcal{P}}  \rightarrow  X^0_{\mathcal{P}}\times Z^1_{\mathcal{P}}$  and define 
\begin{align}\label{eq:spaorthokernel}
\mathcal{X}^{\perp} := \left\lbrace  v \in  X^2_{\mathcal{P}}: \int_{\O_* } v(t,x)v_{*}(t,x)\,dxdt =0\right\rbrace.
\end{align}
By Proposition \ref{propCR-ND-Dir}  and  the Crandall-Rabinowitz Theorem (see \cite[Theorem 1.7]{M.CR}), we then find ${\e_n}>0$ and a smooth curve
$$
(-{\e_n },{\e_n }) \to  (0,\infty) \times \mathcal{U} \subset  \R_+ \times  X^2_{\mathcal{P}}, \qquad s \mapsto (\lambda_n(s), \varphi^n_s)
$$
such that
\begin{enumerate}
\item[(i)] $G_{\lambda_n(s)} (\varphi^n_s)=0$ for $s \in (-{\e_n },{\e_n })$,
\item[(ii)] $ \lambda_n (0)= \l_{n}$, and
\item[(iii)]  $\varphi^n_s = s v_{*}+ s \o_n(s) $ for $s \in (-{\e_n },{\e_n })$ with a smooth curve
$$
(-{\e_n },{\e_n }) \to \mathcal{X} ^{\perp}, \qquad s \mapsto \o_n(s)
$$
satisfying $ \o_n(0) =0$
and
$$\int_{\O_* } \o_n(s) (t,x) v_{*}(t,x)\,dxdt=0.$$
\end{enumerate}
Recalling   \eqref{eq:solfinal},  since  $G_{\lambda(s)} (\varphi^n_s)=0$ for every  $s \in (-{\e_n },{\e_n })$, using $\n u_n(\t, x) \cdot \t = |t|j_{N/2-1,n} I'_{N/2-1}(j_{N/2-1,n}|t|)$, we have from \eqref{eq:solfinal}  and  \eqref{eq:defM} that the function 
$$
u_s:=  \varphi^n_s-h_{\varphi^n_s} \n u_n \cdot \t+u_n
$$ in \eqref{solutinter} solves   \eqref{eq:perturbed-strip-ND-Direqui1} in $\O_*$,  with $h _{\varphi^n_s}(x) = \frac{1}{ j_{N/2-1,n} I_{N/2-1}'(j_{N/2-1,n})} \varphi^n_s(e_1,x) $.  Finally, we deduce from    \eqref{eqans1} that the  solution of   \eqref{eq:solved-ND1} is  given by \eqref{eqans2-ND33}. 
\QED
\end{proof}

\subsection*{Proof of Theorem \ref{Theo1-ND} (completed)}
In view of Theorem \ref{Theo1-general-ND}  and \eqref{eq:changsoluD},  the function
\begin{equation}\label{Solfinalw}
(t,x )\mapsto w_s(t,x)=  U_s( t/ \sqrt{\l_n(s)},x)= u_s((1+h _{\varphi^n_s})/\sqrt{\l_n(s)}t,x)
 \end{equation}
 solves \eqref{eq:solved-main-ND} with $\mu^n_s=\frac{1}{\l_n(s)} j^2_{N/2-1,n}$
 $$
 h^n_s(x)=\frac{1+h_{\varphi^n_s}(x)}{\sqrt{\l_n(s)}}.  
 $$
Furthermore using  \eqref{Neumanndata} and \eqref{constc1},  the  Neumann boundary data in \eqref{eq:solved-main-ND} is  given by 
$$c=\frac{j_{N/2-1,n}  I'_{N/2-1}(j_{N/2-1,n})}{\sqrt{\l_n(s)}}=I'_{N/2-1}(j_{N/2-1,n})\sqrt{\mu^n_s}.$$
Recalling  $$\l(n)=j_{N/2-1,n}^2-\nu_1 \quad \textrm{and} \quad v_{*}(t,x)=I_{N/2-1} (|t| \sqrt{\nu_1})\vartheta(x),$$ where $\vartheta(x):=\cos(x_1)+\cdots+\cos(x_m)$ from  Theorem \eqref{Theo1-general-ND}, we have that 
$$\varphi^n_s(t, x) = s I_{N/2-1} (|t| \sqrt{\nu_1})\vartheta(x) + o(s),$$ where $o(s) \to 0$ in $C^2$-sense in $\ov{\Omega_*}$ as $s \to 0$. 
Hence using \eqref{eexpreh}, 
\begin{equation}\label{eexpre2}
 h _{\varphi^n_s}(x):= s\frac{ I_{N/2-1} ( \sqrt{\nu_1})}{ j_{N/2-1,n} I_{N/2-1}'(j_{N/2-1,n})}\vartheta(x) + o(s) \nonumber\\, 
 \end{equation}
and therefore, 
\begin{align*}
  h^n_{s}(x)&=\frac{1+h_{\varphi^n_s}}{\sqrt{\l_n(s)}}= \frac{1}{\sqrt{\l_n(s)}} + s\frac{I_{N/2-1} ( \sqrt{\nu_1})}{\sqrt{\l_n(s)}j_{N/2-1,n} I_{N/2-1}'(j_{N/2-1,n})}\vartheta(x)+ o(s)\\
&=  \frac{1}{j_{N/2-1,n}} \sqrt{\mu^n_s} + s\frac{I_{N/2-1} ( \sqrt{\nu_1}) }{j_{N/2-1,n} I_{N/2-1}'(j_{N/2-1,n})\sqrt{j_{N/2-1,n}^2-\nu_1}}\vartheta(x)+ o(s) \qquad \text{as $s \to 0$}.
\end{align*}
Finally, by \eqref{Solfinalw}  and  \eqref{solutinter}, 
\begin{align}\label{eqexpannfi}
&w_s(\frac{t}{h^n_s(x))},x)= u_s(\t, x)=  u_n(\t, x)+ \varphi^n_s(\t,x) -\frac{|\t|}{I_{N/2-1}'(j_{N/2-1,n})} I_{N/2-1}'(j_{N/2-1,n}|\t|)  \varphi^n_s(e_1,x)  \\ 
  &=U_n(|t|) + s \Bigl( I_{N/2-1} (|t| \sqrt{\nu_1})-\frac{I_{N/2-1} ( \sqrt{\nu_1}) }{ j_{N/2-1,n} I_{N/2-1}'(j_{N/2-1,n})}|t|U_n'(|t|) \Bigr)\vartheta(x) + o(s),\nonumber
\end{align}
where $o(s) \to 0$ in $C^1$-sense on $\Omega_*$.
We thus have proved Theorem~\ref{Theo1-ND} with the constants
\begin{align*}
\mu_n& =\frac{j^2_{N/2-1,n}}{\l_n(0)} =\frac{j^2_{N/2-1,n}}{\l_n}= \frac{j^2_{N/2-1,n}}{j^2_{N/2-1,n} - \nu_1},\qquad \quad   \kappa_n = \frac{1}{j_{N/2-1,n}},\\
\beta_n &=\frac{I_{N/2-1} ( \sqrt{\nu_1}) }{j_{N/2-1,n} I_{N/2-1}'(j_{N/2-1,n})\sqrt{j_{N/2-1,n}^2-\nu_1}}\,\\
\gamma_n&=-\frac{I_{N/2-1} ( \sqrt{\nu_1}) }{ j_{N/2-1,n} I_{N/2-1}'(j_{N/2-1,n})}
\end{align*}
and with the function
$$
t \mapsto \psi_1(|t|)= I_{N/2-1} (|t| \sqrt{\nu_1}) \quad \textrm{and}\quad U_n(|t|)=u_n(t, x)=I_{N/2-1}(j_{N/2-1,n}|\t|).
$$
\QED

\begin{Remark}\label{noncostNeum}
In this  remark  we  discuss  how  the  approach of  the previous sections can be applied to solve  an overdetermined  problem  with non constant Neumann boundary data related  to  problem  \eqref{eq:Dirichlesign}. 
Using  \eqref{eq:derfin},  we can write 
$$
  \frac{\partial w^\lambda }{\partial\eta_{ \frac{h}{ \sqrt{\lambda}} }}( \frac{ \sqrt{\lambda }}{h(x)} e_1, x)  =  \frac{1}{\sqrt{\lambda}}  \frac{\sqrt{1+ \lambda\frac{ |\nabla h(x)|^2 }{ h^4(x)} } }{\sqrt{1+ \frac{ |\nabla h(x)|^2 }{ h^4(x)} }}  \Biggl(\sqrt{1+ \frac{ |\nabla h(x)|^2 }{ h^4(x)} } \n_t v ( \frac{1}{h(x)} e_1, x)\cdot e_1 \Biggl).
$$
This  with \eqref{eq:changsoluD} allows  to  see that a radial function $v$ in $t$ solves 
\begin{equation}\label{eq:per3}
  \left \{
    \begin{aligned}
& L_{\l,n} v =  0 && \qquad \text{in $\Omega_{1+h}$,}\\
          & v =0 &&\qquad \text{on $\partial \Omega_{1+h}$,}\\
     & \sqrt{1+ \frac{ |\nabla h(x)|^2 }{ (1+h(x))^4} } \n_t v ( \frac{1}{1+h(x)} e_1, x)\cdot e_1 = \n u_n (e_1,\cdot) \cdot  e_1 && \qquad  \text{  $ x\in \R^m$,} \\
    \end{aligned}
       \right.
     \end{equation}
 if and only if  the function $w^\lambda$  solves  the problem  \eqref{eq:Dirichlesign}  in $\partial \Omega_{1+h}$ with   non constant  Neumann boundary data 
\begin{equation}\label{eq:Neumandaat2} 
 c(x):= \frac{\n u_n (e_1, x) \cdot  e_1}{\sqrt{\lambda}}  \frac{\sqrt{1+ \lambda\frac{ |\nabla h(x)|^2 }{ (1+h(x))^4} } }{\sqrt{1+ \frac{ |\nabla h(x)|^2 }{(1+h(x))^4} }}.
\end{equation}
We now emphasize that  problem \eqref{eq:per3} is solved in a similar fashion than problem \eqref{eq:per}. Indeed  as already stated   at the end of Section \ref{sectio1},  the problem  \eqref{eq:per} is equivalent to  \eqref{eq:perturbed-strip-ND-Direqui1}.  Similarly,  we see that \eqref{eq:per3}  is equivalent to
\begin{equation}
  \label{eq:perturbed-strip-ND-Direqui2}
  \left \{
    \begin{aligned}
&L_{\l,n}^{1+h} u =  0 && \qquad \text{in $\Omega_*$,}\\
          & u =0 &&\qquad \text{on $\partial \Omega_*$,}\\
     & (1+h)\sqrt{1+ \frac{|\nabla h(x)|^2 }{(1+h)^4}}   \n_\tau u(e_1, \cdot) \cdot e_1 =\n u_n (e_1,\cdot) \cdot  e_1. &&\qquad \text{in  $\R^m$.} 
    \end{aligned}
       \right.
\end{equation}
We also  note that  when $h=1$, both the problems  \eqref{eq:per} and \eqref{eq:per3} are solved by the $\lambda$-independent  function  
$
 u_n(t, x):= I_{N/2-1}(j_{N-1/2,n}\, |t|).
$
Now by defining 
\begin{align}\label{eq the mapGtilde}
\widetilde{G}_\lambda: \cU   \to X_0  \times Z_1, \qquad G_\lambda(u):= (H_\lambda \circ M(u),  Q_{1}\circ M(u)),
\end{align}
where 
\begin{align}\label{eq: defQ2}
 Q_{1}\circ M(u)&=( 1+ h_u) \sqrt{1+ \frac{|\nabla h_u(x)|^2}{ (1+h_u)^4}}\Bigl(\n u(e_1,\cdot) \cdot  e_1+\n u_n (e_1,\cdot) \cdot  e_1+(N-2)  u (e_1, \cdot)\Bigl)\nonumber\\
& -\n u_n (e_1,\cdot) \cdot  e_1,
\end{align}
it follows that 
\begin{align}\label{eq bifurcate2}
\widetilde{G}_\lambda(u)=0
\end{align}
if and only if   \eqref{eq:perturbed-strip-ND-Direqui2} is solved by  the function in \eqref{eq:solfinal}  with $h$ replaced by $1+h_u$, where  $h_u$ is defined  by \eqref{eq exprehh}.\\

Observe now that  $Q_{1}\circ M $  and $Q_{\lambda}\circ M $ in  \eqref{eq: defQ} have the same linearred operator at $u=0$. Hence  $\widetilde{G}_\lambda$ and $G_\lambda$ share  the same linearised operator in  Proposition \ref{eqlinearised}. Following step by step the argument in the previous sections, one solves \eqref{eq bifurcate2} in the same fashion than \eqref{eq bifurcate}. 
\end{Remark}
 
\section{On the  partially overdetermined problem  \eqref{eq:perturbed-partially} }\label{Parrtialy}
This section is devoted to the overdetermined boundary problem  \eqref{eq:perturbed-partially}.  As already emphasised in the introduction, our setting  will involve spaces of functions that are odd in the variable $t$ for  the condition   $\frac{\partial u}{\partial \eta}  = \pm 1\quad \text{on $\partial \Omega^{\pm}$}$ to hold. 

We consider the open set
$$
\cU_0:= \{ h \in  C^{2,\a}_{p, e}(\R^m) \::\: h >0 \}
$$
 and define for a function  $ h \in \cU_0$, the following domain
\begin{equation}\label{eq:PertTorus2}
\widetilde{\Omega}_h:= \left\{\left(t, x \right)\in  \R \times \R^m  \::\: |t|<\frac{1}{h(x)}  \right\}.
\end{equation}

We  are interested in the overdetermined  boundary  value problem
\begin{equation}\label{eq:pe5}
  \left \{
    \begin{aligned}
\D w +\mu  w &=  0 && \qquad \text{in $ \widetilde{\Omega}_h$,}\\
              u &=0 &&\qquad \text{on $\partial \widetilde{\Omega}_h$,}\\
             \frac{\partial w}{\partial \mu}  &= \pm \gamma  &&\qquad \text{on $\partial  \widetilde{\Omega}_h^{\pm}$,}
    \end{aligned}
       \right.
     \end{equation}
where  $\mu$ is the outer unit normal vector field to the boundary of  $\Omega_h$, $\gamma$ is a  positive constant   and 
$$ \partial \widetilde{\Omega}_h^{\pm}=  \left\{\left(\pm\frac{1}{h(x)}, x\right): \quad x \in \R^m \right\}.$$
For a fixed  integer $n\geq 1$, we  define the operator
\begin{equation}
  \label{eq:opLlam-Dir}
  \widetilde{L}_{\l,n}:= \lambda  \D_{x} + \partial_{tt}  +  n^2\pi^2 \textrm{id}   
 \end{equation}
 with  $\lambda >0$. Then as  in   \eqref{eq:changsoluD},
\begin{equation}
  \label{eq:changsoluD2}
w^{\lambda} (t, x)= v( t/\sqrt{\lambda}, x)
\end{equation}
solves  the  Dirichlet problem  in  \eqref{eq:pe5}  with $\mu=\frac{n^2 \pi^2 }{\lambda}$  on the domain  $\O_{\frac{h}{ \sqrt{\lambda}}}$ if and only if $v$ solves 
\begin{equation}\label{eq:perturbed-strip5}
  \left \{
    \begin{aligned}
 \widetilde{L}_{\l, n}  v &=  0 && \qquad \text{in $\Omega_h$,}\\
              v &=0 &&\qquad \text{on $\partial \Omega_h$.}
    \end{aligned}
       \right.
     \end{equation}
Furthermore the  outer normal on $\partial  \widetilde{\Omega}_h $ with respect to the Euclidean
metric $g_{eucl}$ given by
\begin{equation}
  \label{eq:def-mu-phi}
\mu_h (t, x) = \dfrac{ 1}{\sqrt{1+ \frac{ |\nabla h'(x)|^2 }{ h^4(x)} }} \Bigl(\dfrac{t}{|t|},  \dfrac{ \nabla h(x) }{ h^2(x)}, \Bigl)  \in \R\times \R^{m} \qquad \text{for
$(t, x) \in \partial  \widetilde{\Omega}_h$}
\end{equation}
 and we obtain
\begin{align} \label{eq:dlw}
  \frac{\partial w^\lambda }{\partial\mu_{ \frac{h}{ \sqrt{\lambda}} }}(t, x) = \dfrac{ 1}{\sqrt{1+ \lambda \frac{ |\nabla h(x)|^2 }{ h^4(x)} }}  \Bigl[ \dfrac{t}{|t|} \partial_t w^\lambda (t, x)   +  \sqrt{\lambda }\frac{ \n h(x) }{ h^2(x)} \cdot \n_x w^\lambda (t, x) \Bigl].
\end{align}
Since we require $w^{\lambda} (t, x)=0$ on  $\partial \Omega_{h/\sqrt{\lambda}}$, we have 
 $ w^{\lambda} ( \pm \frac{ \sqrt{\lambda }}{h(x)}, x)=0$ for all $x\in \R^m$ and differentiating  this with respect to $x$, 
$$
  \n_x w^\lambda(\pm \frac{ \sqrt{\lambda }}{ h(x)}, x)= \pm\sqrt{\lambda }\partial_t w^\lambda ( \pm\frac{ \sqrt{\lambda }}{h(x)}, x)\frac{ \n h(x) }{ h^2(x)} 
$$ 
so that 
\begin{align}\label{eqfin}
  \frac{\partial w^\lambda }{\partial \mu_{ \frac{h}{ \sqrt{\lambda}} }}( \pm\frac{ \sqrt{\lambda }}{h(x)}, x) &=\pm \sqrt{1+ \lambda \frac{ |\nabla h(x)|^2 }{ h^4(x)} } \partial_t w^\lambda  ( \pm\frac{ \sqrt{\lambda }}{h(x)}, x)=  \pm \frac{1}{\sqrt{\lambda}}\sqrt{1+ \lambda \frac{ |\nabla h(x)|^2 }{ h^4(x)} }  \partial_t v ( \pm\frac{1}{h(x)}, x).
\end{align}

\begin{Remark}\label{Rmm}
It is obvious that if  $ w^\lambda$ is odd in $t$, then  $\partial_t w^\lambda  ( -\frac{ \sqrt{\lambda }}{h(x)}, x)=\partial_t w^\lambda  ( +\frac{ \sqrt{\lambda }}{h(x)}, x)$  and the first equality in  \eqref{eqfin}  yields  $ \frac{\partial w^\lambda }{\partial \mu_{ \frac{h}{ \sqrt{\lambda}} }}( +\frac{ \sqrt{\lambda }}{h(x)}, x) =-\frac{\partial w^\lambda }{\partial \mu_{ \frac{h}{ \sqrt{\lambda}} }}(-\frac{ \sqrt{\lambda }}{h(x)}, x)$. Therefore  
 \begin{equation}\label{eq:normlllls}
 \frac{\partial w^\lambda }{\partial \mu_{ \frac{h}{ \sqrt{\lambda}} }}( +\frac{ \sqrt{\lambda }}{h(x)}, x)=+\frac{1}{\sqrt{\lambda}} \Longleftrightarrow   \frac{\partial w^\lambda }{\partial \mu_{ \frac{h}{ \sqrt{\lambda}} }}( -\frac{ \sqrt{\lambda }}{h(x)}, x)=-\frac{1}{\sqrt{\lambda}}. 
\end{equation}
We also   see  from the second  equality in  \eqref{eqfin} that 
 \begin{equation}\label{eq:equiNeuman}
 \frac{\partial w^\lambda }{\partial \mu_{ \frac{h}{ \sqrt{\lambda}} }}( +\frac{ \sqrt{\lambda }}{h(x)}, x)=+\frac{1}{\sqrt{\lambda}} \Longleftrightarrow  \sqrt{1+ \lambda \frac{ |\nabla h(x)|^2 }{ h^4(x)} }  \partial_t v ( \pm\frac{1}{h(x)}, x)=+1. \\
\end{equation}
 \end{Remark}
From  \eqref{eq:equiNeuman} and  \eqref{eq:changsoluD2}, we  have that an  odd function $v$  in the variable  $t$ solves 
\begin{equation}\label{eq:per4}
  \left \{
    \begin{aligned}
& \widetilde{L}_{\l,n} v =  0 && \qquad \text{in $\Omega_h$,}\\
          & v =0 &&\qquad \text{on $\partial \Omega_h$,}\\
     &\sqrt{1+ \lambda \frac{ |\nabla h(x)|^2 }{ h^4(x)} }  \partial_t v ( \pm\frac{1}{h(x)}, x)=+1 && \qquad  \text{  $ x\in \R^m$,} \\
    \end{aligned}
       \right.
     \end{equation}
if and only if the function  $w^\lambda$ in \eqref{eq:changsoluD2}  solves   the problem \eqref{eq:pe5} with  
\begin{equation}\label{eq:Neumandata22}
\gamma = \frac{1}{\sqrt{\lambda}}.
 \end{equation} 
We emphasize that when $h=1$ in  \eqref{eq:per4}, then  $\widetilde{\Omega}_1:=\Omega_1= (-1, 1)\times \R^m$ and  the problem  \eqref{eq:per4}  is solved by the $\lambda$ independent function 
\begin{equation}\label{eq:trivialsol1}
v_n(t, x):=\frac{(-1)^n}{n\pi} \sin(n\pi t), \quad t\in  (-1, 1). 
  \end{equation}

Observe  also  that  set  that  $\widetilde{\Omega}_h$ is parametrized by the mapping
$$ \Psi_h: \Omega_1  \to  \widetilde{\Omega}_h, \quad  (\t, x)  \mapsto (t, x)=(\frac{\t}{h(x)}, x ),$$ with inverse given by 
$ \Psi^{-1}_h:   \widetilde{\Omega}_h   \to  \Omega_1, \quad  (t, x)  \mapsto   (h(x) t, x)$.\\

We pull-back \eqref{eq:per4}  on the fixed domain   $\Omega_1=(-1, 1)\times \R^m$ via the ansatz 
\begin{align}\label{eqansnew}
v(t, x)= u(h(x)t, x)=u(\tau, x) \qquad \text{for some function $u: \Omega_1 \to \R$.}
\end{align}
We need  to find the differential operator $\widetilde{L}^h_\lambda$ with the property that 
\begin{align}\label{eqreladiffopets}
[\widetilde{L}^h_\lambda u] (h(x)t, x) = [\widetilde{L}_{\l,n}  v](t, x) \qquad \text{for $(t, x) \in \Omega_h$}
\end{align}
for the function $v: \Omega_h \to \R$, $v(t, x)=u(h(x)t, x)$.\\

A direct computation yields 
\begin{align*}
\widetilde{L}_\lambda^h v_h(\t,x) =&n^2\pi^2 u(h(x)t,x) + \lambda \Delta_{x}u(h(x)t,x) +   h^2(x) \partial^2_\tau u(h(x)t,x)   \\
                      &+ \lambda  t^2 |\nabla_x h(x)|^2  \partial^2_\tau u(h(x) t,x) + 2 \lambda  t \nabla_x h(x) \nabla_x \partial_\tau u(h(x) t,x)\\
  &+  \lambda  t \Delta h(x) \partial_t u(h(x) t,x) \qquad  \text{for $(t,x) \in \Omega_h$.}
\end{align*}
Recalling $\tau=h(x)t$,  therefore gives   
\begin{align} 
\widetilde{L}_\lambda^h v_h(\t,x) =&n^2\pi^2 u(\tau,x) + \lambda \Delta_{x} u(\tau, x) + \Bigl( h^2(x) +\frac{\lambda  t^2}{h^2(x)} |\nabla_x h(x)|^2 \Bigl)  \partial^2_\tau u(\tau, x)\nonumber\\
 &+ 2 \frac{\lambda  t }{h(x)}  \nabla_x h(x) \cdot \nabla_x \partial_\tau u(\t,x)+ \frac{\lambda  t }{h(x)} \Delta h(x) \partial_\t u(\t,x) \qquad  \text{for $(\t,x) \in \Omega_1$.}\label{eq:reldiffope-ND}
\end{align}
Here $\nabla_x$ and $\Delta_x$ denote the gradient and Laplacian with respect to the variable $x \in \R^m$,  and we  simply write $\nabla$ and $\Delta$ when there is no confusion.

From \eqref{eqansnew} and \eqref{eqreladiffopets} the  problem \eqref{eq:per4}  is  equivalent to 
\begin{align}\label{eq:Proe1-ss1}
  \begin{cases}
    [\widetilde{L}^h_\lambda u = 0 &\quad \textrm{ in}\quad  \Omega_1 \\
     u=0    &\quad\textrm{ on} \quad \partial  \Omega_1 \\
    h(\cdot) \sqrt{1+ \lambda \frac{ |\nabla  h(\cdot) )|^2 }{ h^4(\cdot)} }  \partial_t u (1, \cdot)=+1  &\quad\textrm{ in} \quad \R^m .\\
  \end{cases}
  \end{align}

\section{Functional setting  and  the linearized operator}\label{eq.setting}

To set up a framework for  problem   \eqref{eq:Proe1-ss1}, we  define for  $k \ge 0$ and $\a\in (0,1)$, 
$$
C^{2,\alpha}_{p,e}(\overline \Omega_1):= \{u \in C^{2,\alpha}(\overline \Omega_1)\::\: \text{$u=u (t, x)$ is  even   and $2\pi$ periodic in   $x_1, \cdots,x_m $ }\},
$$
$$
\widetilde{X}_k:= \{u \in C^{k,\alpha}_{p,e}(\overline \Omega_1)\::\: \text{$u$ is odd in $t$}\},
$$
$$
\widetilde{X}^D_k:= \{u \in \widetilde{X}_k\::\: \text{$u=0$} \quad \textrm{on}  \quad \partial \O_1 \}
$$
as well as $$
Y_k:= \{h \in C^{k,\alpha}(\R^m)\::\: \text{$h$ is even and $2\pi$ periodic in  $x_1, \cdots,x_m $ } \}. 
$$ and    $$Y_k^+:= \{h \in   Y_k   \::\: h>-1\}.$$ 

Recalling \eqref{eq:Proe1-ss1} and  \eqref{eq:normlllls},  our  aim is to prove that for  some parameter  $\lambda$, we can find the functions  $(u,h)\in  X_2 \times Y_2^+$  such that 
\begin{align}\label{eq:Proe1122}
\left \{
    \begin{aligned}
    &\widetilde{L}^{h}_{\lambda, n} u = 0 & \quad \textrm{ in}\quad  \Omega_1\\
&u=0&  \quad\textrm{ on} \quad \partial  \Omega_1 \\
&\widetilde{K}_\lambda (u, h)=0  &\quad\textrm{ in} \quad \R^m,
  \end{aligned}
       \right.
\end{align}
where  $\widetilde{K}_\lambda :  X_2 \times Y_2^+ \longrightarrow Y_1$ is defined by 
$$\widetilde{K}_\lambda (u, h):=h(\cdot) \sqrt{1+ \lambda \frac{ |\nabla  h(\cdot) )|^2 }{ h^4(\cdot)} }  \partial_t u (1, \cdot)-1.$$ 
\begin{Remark}\label{eqrresolutionform2}
Since $ (t,x) \mapsto v_n(t, x)=\frac{(-1)^n}{n\pi} \sin(n\pi t)$ solves  $ L_{\l,n} v_n=0$ in  $\R\times  \R^{m} \supset \O_{1+h}$,  we have by  \eqref{eqreladiffopets} that  
$$\widetilde{L}_{\l,n}^{1+h}  v_n(\t/(1+h))=  0  \qquad \text{in $\Omega_*$}.$$
Furthermore, 
\begin{equation}\label{eq:approxisolu2}
v_n(t/(1+h))=v_n(t, x)-t h(x) v'_n(t, x) + O(||h||^2_{C^{2,\a}(\R^m)}). 
\end{equation}
We define  $$V(t, x):=  v_n(t, x)- t h(x) v'_n(t, x)+u(t, x) $$ with $u$ and $h$ small and $u$ odd in $t$.
Then  $V(\pm 1, x) =0$ for all $x\in \R^m$ if and only if 
\be
h=h_u= u (1,\cdot)  
\ee
In addition since $v''_n(1, t)=0$, we have 
\begin{align}
\partial_t V(1, x)&= 1-u(1, x)+\partial_t u (1, x),
\end{align}
 and hence   \eqref{eq:Proe1122}  reads 
\begin{align} \label{eq:spacecondi2}
 \widetilde{K}_\lambda (V, 1+h_u)=\sqrt{1+ \lambda \frac{|\nabla h_u(x)|^2  }{ (1+h_u)^4(x)}}( 1+ h_u(x)) \Bigl(1-u(1, x)+\partial_t u (1, x)\Bigl)-1. 
\end{align}
\end{Remark}
The Remark \ref{eqrresolutionform2} allows us to  further reduce problem \eqref{eq:Proe1122}  to the finding of the only  unknown  $u$.  For that, we consider the open set  $$
\cV:= \{ u \in  \widetilde{X}_2 \::\: u(1. \cdot)  >-1 \}
$$
and define the mapping 
\begin{align}\label{eq:maptFF}
\widetilde{F}_\lambda:  \cV  \to \widetilde{X}_0  \times Y_1  , \qquad \widetilde{F}_\lambda(u):= (\widetilde{L}^{1+h_u}_{\lambda, n} ( v_n+\widetilde{M}(u)),  \widetilde{K}_\lambda (v_n+ \widetilde{M}(u), 1+h_u),
\end{align}
where  
$$ \widetilde{M}: \cV  \to \widetilde{X}_2^D, \quad  \widetilde{M}(u)(t, x): =- t h_u(x) v'_n(t, x)+u(t, x).$$
If is clear by construction that  if 
\begin{align}\label{eq to bifurcate}
\widetilde{F}_\lambda(u)=0,
\end{align}
then the function 
\begin{align}\label{eq finalsolu}
 v_n +\widetilde{M}(u)
\end{align}
solves the problem \eqref{eq:Proe1122}, with
\begin{align}\label{eq exprehhh}
h=1+u(1. \cdot).
\end{align}
It is plain that
\begin{align}\label{eqtrivialbranche}
\widetilde{F}_\lambda(0) = 0 \qquad \text{for all $\lambda >0$.}
\end{align}
Furthermore, arguing similarly as in Proposition \ref{eqlinearised}, we see that the map  $F_\lambda$  defined by \eqref{eq:maptFF} is smooth and 
\begin{align}\label{eq diffope2}
\cH_\lambda:=D \widetilde{F}_\lambda(0)(v)=  \Bigl( \pi^2 n^2 v+ \lambda \Delta_{x}v + v_{tt}, v_t(1, \cdot)  \Bigl).
\end{align}

\section{Study of the linearised  operator  $\cH_\lambda$}\label{exkus}
In this section,  we study the operator $\cH_\lambda$ and determined its kernel. Let us first study the ODE
\begin{equation}\label{eq:ODE4}
\begin{cases}
\displaystyle b''+ \mu b=0,\quad \textrm{in}\quad (0,1) \\
\displaystyle \displaystyle b(0)=0,\quad  b'(1)=0.
\end{cases}
\end{equation}
where $\mu:=\pi^2 n^2- \l  k^2$. 
Then we distinguish the following cases.
\subsection*{The case $\mu =0$}
It is clear that the solution  of  \eqref{eq:ODE4} is trivial in this case. 
\subsection*{The case $\mu< 0$.}
A fundamental system of the linear equation is then given by
$$
\phi_1^\mu(t)= e^{-\sqrt{-\mu }t}, \qquad  \phi_2^\mu(t)= e^{\sqrt{-\mu}t}
$$
and  \eqref{eq:ODE4}  as no solution in this case. 
\subsection*{The case $0< \mu $.}
In this case, a fundamental system of the linear equation is given by
$$
\phi_1^\mu(t)= \cos (\sqrt{\mu}t), \qquad  \phi_2^\mu(t)= \sin (\sqrt{\mu}t)
$$
and we have 
\begin{align}\label{solODE}
b(t) =A \sin (\sqrt{\mu}t)
\end{align}
for some  real constant $A$.
Consequently  for $A\ne 0$,
$$
b'(1)=0 
$$
iff and only if  
\begin{equation}\label{seqofzeroewws} 
\sqrt{\mu}= (\frac{1}{2}+\ell)\pi, \quad \ell \in \N.
\end{equation}
In the following, we let 
$$
H^{k}_{p, odd}(  \Omega_1):= \{ u \in H^k(\O_1)\::\: \text{$u$ is odd  in ${\t}$, $2\pi$ periodic and  even   $x_1, \cdots,x_m$ } \},
$$
$$
\cH^{k}_{p, odd}(  \Omega_1):= \{ u \in H^k_{p, odd}(\O_1)\::\: \text{$\partial_t u(1, \cdot)=0$ on $ \R^m$  } \}
$$
 and 
$$
\cH^{k}_{\mathcal{P} }(  \Omega_1):= \{ u \in \cH^{k}_{p, odd}(  \Omega_1),\quad  u(\cdot , x)=  u(\cdot, \bold{p}(x))  \text{ for all $ x \in \R^m$, $\bold{p} \in \mathcal{P} $}\}. 
$$
We  also put  
\begin{align}\label{eq:foroofc21}
\gamma_n:=\pi^2 n^2-\frac{\pi^2}{4}  \quad \textrm{and}\quad  J_\ell:=(\frac{1}{2}+\ell)\pi
\end{align}
 and 
\begin{align}\label{eq diffope2}
\widetilde{L}_{\lambda, n}:= \pi^2 n^2  \textrm{id}+ \lambda \Delta_{x}v + v_{tt}.
\end{align}
\begin{Lemma}\label{eq:lemmkerimH}
The  set of non-trivial solutions  $v \in  \cH^{2}_{\mathcal{P} }$ to $ \cH_{\gamma_n}v=0$ in $\O_1$  is  spanned  by  
\begin{align}\label{eq elkernel}
\widetilde{v}_0(t, x)= \sin(\frac{\pi t}{2} ) (\cos(x_1)+\cdots+\cos(x_m)). 
\end{align}
Moreover, 
\be\label{eq:inclusion3}
Im \cH_{\gamma_n} = E_{0}^{\perp},
\ee
where 
\be\label{cokernel2}
E_{0}^{\perp} := \left\lbrace (w, h)\in \widetilde {X}_0\times Y_1: \int_{ \Omega_1}   w (t, x)   \widetilde{v}_0(t, x) \,dxdt- \int_{\partial  \Omega_1 } h(x) \widetilde{v}_0(t, x) dt dx=0 \right\rbrace.
\ee
\end{Lemma}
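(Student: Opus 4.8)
The plan is to treat Lemma~\ref{eq:lemmkerimH} exactly as the odd-in-$t$ counterpart of Lemma~\ref{eq:lemmkerimG}: first I would compute the kernel of $\cH_{\gamma_n}$ by separating variables and invoking the ODE trichotomy around \eqref{eq:ODE4}, then I would identify its range through a G{\aa}rding-inequality plus Fredholm-alternative argument for the bilinear form attached to $\widetilde L_{\gamma_n,n}$ with its natural boundary condition.

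For the kernel, I would expand $v\in\cH^{2}_{\mathcal{P}}$ into Fourier modes in $x$ inside the even, $2\pi$-periodic, permutation-invariant sector; a mode whose $x$-part $e$ satisfies $-\Delta_x e=|\mathbf k|^2 e$ contributes a profile $b(t)e(x)$ for which $\cH_{\gamma_n}v=0$ forces $b$ to solve \eqref{eq:ODE4} with $\mu=\pi^2n^2-\gamma_n|\mathbf k|^2$ (the second, boundary component of $\cH_{\gamma_n}v$ being automatically zero, since membership in $\cH^{2}_{\mathcal{P}}$ already encodes $\partial_t v(1,\cdot)=0$). By the three cases recorded after \eqref{eq:ODE4}, a nonzero $b$ exists only when $\mu>0$ and $\sqrt\mu=J_\ell=(\tfrac12+\ell)\pi$ for some $\ell\ge0$. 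The quantitative point is that $\mu>0$ forces $|\mathbf k|^2<\pi^2n^2/\gamma_n=n^2/(n^2-\tfrac14)\le\tfrac43$, so $|\mathbf k|^2\in\{0,1\}$: the case $|\mathbf k|^2=0$ gives $\sqrt\mu=n\pi$, never a half-integer multiple of $\pi$, while $|\mathbf k|^2=1$ (which forces $e(x)=\cos x_j$ for a single $j$) gives $\mu=\pi^2n^2-\gamma_n=\pi^2/4$, i.e. $\ell=0$ and $b(t)\propto\sin(\tfrac{\pi t}{2})$. The $m$ resulting modes $\sin(\tfrac{\pi t}{2})\cos x_j$ collapse, on the permutation-invariant subspace, to the single function $\sin(\tfrac{\pi t}{2})\vartheta(x)=\widetilde v_0$, so $N(\cH_{\gamma_n})=\mathrm{span}\{\widetilde v_0\}$.

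For the range I would introduce
$$
\widetilde\cB(u,v):=\int_{\Omega_1}\bigl[\partial_t u\,\partial_t v+\gamma_n\nabla_x u\cdot\nabla_x v\bigr]-\pi^2n^2\int_{\Omega_1}uv
$$
on $H^{1}_{p, odd}(\Omega_1)$ (restricted to its permutation-invariant part). Integrating $\widetilde L_{\gamma_n,n}U=w$ against a test function and using parity in $t$ ($U$ odd, $\partial_tU$ even, the test function odd), the boundary contributions at $t=\pm1$ coalesce into a single term on $\{t=1\}$, so $\widetilde\cB(U,\cdot)$ equals the functional $\varphi\mapsto-\int_{\Omega_1}w\varphi+(\text{const})\int_{\{t=1\}}h\,\varphi(1,\cdot)$. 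Since $\widetilde v_0$ solves both the interior equation and $\partial_t\widetilde v_0(1,\cdot)=0$, one gets $\widetilde\cB(\widetilde v_0,\cdot)\equiv0$; testing $\cH_{\gamma_n}U=(w,h)$ against $\widetilde v_0$ then yields the orthogonality relation cutting out $E_{0}^{\perp}$, i.e. $\mathrm{Im}(\cH_{\gamma_n})\subseteq E_{0}^{\perp}$. For the converse I would diagonalise $\widetilde\cB$ in the basis $\sin(J_\ell t)\widetilde\omega_k(x)$: the spectral values $\widetilde\sigma_{k,\ell}:=J_\ell^2-\pi^2n^2+\gamma_n k^2$ are bounded below, only finitely many are $\le0$, and $\widetilde\sigma_{k,\ell}\sim\ell^2+k^2$ (using \eqref{eq:foroofc21}), giving a G{\aa}rding inequality $\widetilde\cB(u,u)\ge C\|u\|_{H^1}^2-c\|u\|_{L^2}^2$. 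With the compact embedding $H^1\hookrightarrow L^2$, after adding a large multiple of $\int uv$ and projecting off $\langle\widetilde v_0\rangle$, the form is strictly nondegenerate, so Fredholm's alternative \cite[Theorem~2.3]{Kress} applies; the homogeneous equation $\widetilde\cB(v,\cdot)=0$ has only $v=A\widetilde v_0$, hence for $(w,h)\in E_{0}^{\perp}$ the solvability obstruction vanishes and there is $U\in PH^{1}_{p, odd}$ with $\widetilde\cB(U,\cdot)=\ell(\cdot)$ on the whole space. Elliptic regularity (as $(w,h)\in C^{0,\alpha}\times C^{1,\alpha}$) together with uniqueness preserving the symmetries upgrade $U$ to $\cH^{2}_{\mathcal{P}}$, whence $E_{0}^{\perp}\subseteq\mathrm{Im}(\cH_{\gamma_n})$.

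The step I expect to be most delicate is the boundary bookkeeping in the odd-in-$t$ setting: unlike the radial case of Lemma~\ref{eq:lemmkerimG}, integration by parts in $t$ picks up contributions at both endpoints $t=\pm1$, and one must use carefully that $U$ is odd while $\partial_tU$ is even (and that the Neumann-type condition in $\cH_\lambda$ is imposed only at $t=1$) so that these collapse to the single term appearing in the definition of $E_{0}^{\perp}$; fixing the resulting constant correctly, and confirming that the modes $\widetilde\omega_k$ genuinely diagonalise $\widetilde\cB$ on the permutation-invariant sector (so the G{\aa}rding estimate holds for all $u$, not merely the listed modes), is where the care is needed. Everything else — the ODE trichotomy, the spectral growth $\widetilde\sigma_{k,\ell}\sim\ell^2+k^2$, and the Lax--Milgram/Fredholm machinery — is structurally identical to the proof of Lemma~\ref{eq:lemmkerimG}.
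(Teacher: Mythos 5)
Your proposal follows essentially the same route as the paper's proof: for the kernel, Fourier decomposition in $x$ (using the permutation-invariant modes $\omega_k$) reduces $\cH_{\gamma_n}v=0$ to the ODE \eqref{eq:ODE4}, whose solvability forces $k=1$, $\ell=0$ and hence $v\propto\widetilde v_0$; for the range, you pass to the bilinear form $\widetilde\cB$, diagonalise it with eigenvalues $\widetilde\sigma_{k,\ell}=J_\ell^2-\pi^2n^2+\gamma_n k^2$, derive a G{\aa}rding estimate, and invoke Fredholm's alternative plus elliptic regularity — which is exactly what the paper does, explicitly deferring to the argument of Lemma~\ref{eq:lemmkerimG}(iii). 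The only added value in your write-up is the explicit numerical bound $|\mathbf k|^2<n^2/(n^2-\tfrac14)\le\tfrac43$ (the paper reaches the same conclusion via $k^2=(\pi^2n^2-J_\ell^2)/(\pi^2n^2-J_0^2)\le1$) and the flagged care about the two endpoints $t=\pm1$ collapsing to one boundary term under the odd-in-$t$ symmetry, which the paper leaves implicit; neither changes the substance of the argument.
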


\begin{proof}
We write any function $ v \in  \cH^{2}_{\mathcal{P} }$  as 
\be\label{decomposi22}
v(t,x)= \sum_{k \in \N \cup
\{0\} }  v_k(t) \omega_{k}(x), \quad \textrm{with} \quad 
\omega_{k}(x)=\sum_{j=1}^m \cos(k x_j). 
\ee
 Then   $ v \in \cH^{2}_{p, odd}(  \Omega_1)$ is solution  to   $ \cH_{\gamma_m}v=0$ in $\O_1$  if and only if the coefficients  $v_k(t)$ satisfy the ODE \eqref{eq:ODE4}. It follows from  \eqref{seqofzeroewws} and \eqref{solODE} that  $v_k(t)$ is non-trivial if and only if 
$$
\sqrt{\pi^2 n^2- \gamma_n |k|^2}= J_\ell \Longleftrightarrow k^2 =\frac{\pi^2 n^2-J^2_\ell}{\pi^2 n^2-J^2_0} \leq 1 \Longleftrightarrow k=1 \quad \textrm{and}\quad \ell=0. 
$$
 and  $v_1=\sin(\frac{\pi t}{2})$ yielding  \eqref{eq elkernel}. 

To prove  \eqref{eq:inclusion3}, we consider the  family  $\widetilde{\omega}_k:=\frac{\omega_{k}}{\|\omega_k\|_{L^2}}$  and similarly as  in  \eqref{eqdecomp00}, we can write 
$$
v(t,x)= \sum_{k \in \N \cup \{0\}, \ell \in \N}  v_{k,\ell} \widetilde{ \psi}_{\ell }(t)\widetilde{\omega}_k (x),
$$
where  $\widetilde{ \psi_\ell}=\frac{\psi_{\ell}}{\|\psi_{\ell}\|_{L^2(-1, 1)}}$, $\psi_{\ell}(t)=\sin(J_\ell t)$ and $J_\ell$ is defined in  \eqref{eq:foroofc21}. Then the norm 
$\|u\|_{H^1_{p, odd }(\O_1)}$ is equivalent to $ \sum_{k, \ell } (1+\ell^2+k^2)u_{k,\ell}^2$.
We also note that the corresponding bilinear form  $\widetilde{\cB}$  to $\cH_\lambda$ satisfies 
\begin{align*}
\widetilde{\cB}(u,u)=  \sum_{ k, \ell  } \widetilde{\s}_{k,\ell }v_{k,\ell}^2,  \quad \textrm{with}\quad \widetilde{\s}_{k,\ell }:= J_\ell-\pi^2n^2+k^2\gamma_n. 
\end{align*}
Hence, it follows from   \eqref{eq:foroofc21} that $  \widetilde{\s}_{k,\ell } \backsim \ell ^2+k^2$ as $\ell^2+k^2 \rightarrow \infty$.  
The proof of \eqref{eq:inclusion3} therefore  follows  step by step  Lemma \ref{eq:lemmkerimG}(iii). 
\QED
\end{proof}

\section{Proof of Theorem \ref{Theo2-DN}}\label{sec:compl-proof-theor}

The proof of Theorem~\ref{Theo2-DN} will be completed by applying the
Crandall-Rabinowitz Bifurcation theorem \cite{M.CR}. To proceed, we  consider the spaces 
\begin{align*}
&X:= \{ u \in  \widetilde{X}_2\::\: u(\cdot , x)=  u(\cdot, \bold{p}(x))  \text{ for all $ x \in \R^m$, $\bold{p} \in \mathcal{P} $}\},\\
&Y:= \{ (u, h)\in \widetilde{X}_0 \times Y_1 \::\:  u(\cdot , x)=  u(\cdot, \bold{p}(x)), \quad h(x)=  h(\bold{q}(x))  \text{ for all $ x \in \R^m$ $  \bold{p}, \bold{q} \in
\mathcal{P} $}\}.
\end{align*}
It is plain from \eqref{eq:derivepermutrule}  that  $\widetilde{F}_\lambda$  in  \eqref{eq:maptFF} maps $\cV\cap X$ into $Y$. We consider the open set
\begin{equation}
  \label{eq:def-cO4}
\cO:= \{(\lambda, u) \in \R \times X \::\: \lambda>0, \: u(1, \cdot)  > -1 \}  \subset \R \times X
\end{equation}
and define the operator \be \label{eq:def--G4}  S: \cO \subset \R \times X  \to Y,
\qquad S(\lambda,  u) =\widetilde{F}_\l(u). \ee
Then from  \eqref{eqtrivialbranche}, we have 
$$S(\lambda,  0) = 0 \qquad \text{for all $\lambda >0$.}$$ and moreover,
\begin{equation}
  \label{eq:cGH-lambda}
 D_ u  S (\lambda,0)=  D_ u \widetilde{F}_\lambda(0)\big|_X = \cH_\lambda|_X \in \cL(X,Y).
\end{equation}
We have the following.

\begin{Proposition}\label{propPhi4}
The linear operator
$$
\cH_n:=\cH_{\gamma_n}\big|_X \in \cL(X,Y )
$$
has the following properties.
\begin{itemize}
\item[(i)] The kernel $N(\cH_n)$ of $\cH_n$ is spanned by the function
  \begin{equation}
    \label{eq:def-v-04}
\widetilde{v}_0 \in X, \qquad \widetilde{v}_0(t, x)= \sin(\frac{\pi t}{2} ) (\cos(x_1)+\cdots+\cos(x_m))
  \end{equation}
\item[(ii)] The range of $\cH_n$ is given by
$$
R(\cH_n)= E_{0}^{\perp}
$$
\end{itemize}
Moreover,
\begin{equation}
  \label{eq:transversality-cond4}
\partial_\lambda \Bigl|_{\lambda= \gamma_n} \cH_\lambda \widetilde{v}_0 \;\not  \in \; R(\cH_n).
\end{equation}
\end{Proposition}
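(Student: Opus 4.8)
The plan is to read off parts (i) and (ii) from Lemma~\ref{eq:lemmkerimH} — exactly as Proposition~\ref{propCR-ND-Dir} was deduced from Lemma~\ref{eq:lemmkerimG} in the Dirichlet eigenvalue case — and then to settle the transversality statement (iii) by a one-line differentiation in $\lambda$. For (i), a function $v\in X$ lies in $N(\cH_n)$ precisely when $\pi^2 n^2 v+\gamma_n\Delta_x v+v_{tt}=0$ in $\Omega_1$ together with $v_t(1,\cdot)=0$ on $\R^m$; by Lemma~\ref{eq:lemmkerimH} the set of such functions in $\cH^2_{\mathcal{P}}(\Omega_1)$ is spanned by $\widetilde v_0$, and $\widetilde v_0\in X$, so $N(\cH_n)=\spann\{\widetilde v_0\}$ is one dimensional, which is what makes the Crandall--Rabinowitz machinery applicable. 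For (ii), Lemma~\ref{eq:lemmkerimH} already identifies $Im\,\cH_{\gamma_n}=E_0^\perp$ on the relevant function spaces; since $\cH_\lambda$ commutes with coordinate permutations (the content of \eqref{eq:derivepermutrule}), the same description restricts to $\cH_n\in\cL(X,Y)$, so $R(\cH_n)=E_0^\perp$ is a closed subspace of $Y$ of codimension one.

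For (iii), I would use the explicit form $\cH_\lambda(v)=\bigl(\pi^2 n^2 v+\lambda\Delta_x v+v_{tt},\,v_t(1,\cdot)\bigr)$. Applied to the fixed kernel element $\widetilde v_0$, only the summand $\lambda\Delta_x\widetilde v_0$ carries a $\lambda$-dependence, so
$$
\partial_\lambda\big|_{\lambda=\gamma_n}\cH_\lambda\widetilde v_0=\bigl(\Delta_x\widetilde v_0,\,0\bigr).
$$
Because $\widetilde v_0(t,x)=\sin(\frac{\pi t}{2})\vartheta(x)$ with $\vartheta(x)=\cos(x_1)+\cdots+\cos(x_m)$ and $\Delta_x\vartheta=-\vartheta$, the right-hand side equals $(-\widetilde v_0,\,0)$. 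Evaluating on this pair the linear functional $(w,h)\mapsto\int_{\Omega_1}w\,\widetilde v_0\,dx\,dt-\int_{\partial\Omega_1}h\,\widetilde v_0\,dt\,dx$ whose kernel is $E_0^\perp$ gives $-\int_{\Omega_1}\widetilde v_0^2\,dx\,dt<0$, which is nonzero; hence $(-\widetilde v_0,0)\notin E_0^\perp=R(\cH_n)$, establishing \eqref{eq:transversality-cond4}.

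I do not expect any real difficulty here. The two points that deserve a moment's attention are that the kernel and range statements of Lemma~\ref{eq:lemmkerimH}, formulated over the Sobolev/Hölder scale, restrict cleanly to the permutation-symmetric spaces $X$ and $Y$ — ensured because the whole construction is permutation-equivariant, cf.\ \eqref{eq:derivepermutrule} — and the sign in $\Delta_x\vartheta=-\vartheta$, which is precisely what forces the transversality pairing to be strictly negative rather than $0$. Everything else is a transcription of the argument already carried out for Proposition~\ref{propCR-ND-Dir}.
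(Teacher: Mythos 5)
Your proposal is correct and follows essentially the same route as the paper: parts (i) and (ii) are read off from Lemma~\ref{eq:lemmkerimH}, and the transversality condition comes from $\partial_\lambda\big|_{\lambda=\gamma_n}\cH_\lambda\widetilde v_0=(\Delta_x\widetilde v_0,0)=(-\widetilde v_0,0)$. You in fact supply one detail the paper leaves implicit, namely that pairing $(-\widetilde v_0,0)$ against the functional defining $E_0^{\perp}$ yields $-\int_{\Omega_1}\widetilde v_0^2\,dx\,dt<0$, so this element is not in the range.
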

\begin{proof}
$ (i)$   and (ii)  obviously follow from Lemma \ref{eq:lemmkerimH}. Finally, using \eqref{eq diffope2}, we  find 
$$
\partial_\lambda \Bigl|_{\lambda= \gamma_n} \cH_\lambda \widetilde{v}_0  = (\Delta_{x}\widetilde{v}_0, 0)=(- \widetilde{v}_0 , 0).
$$
The  proof is complete.
   \QED
\end{proof}
The following result provides the ingredients needed to complete the proof of Theorem \ref{Theo2-DN}. 
\begin{Theorem}\label{Theo1-general-ND22}
For every $n\in \N$, there  exist  ${\rho_n}>0$ and a smooth curve
$$
(-{\rho_n},{\rho_n}) \to   (0,+\infty) \times X,\qquad s \mapsto (\gamma_n(s),\phi^n_s)
$$
with
$\gamma_n(0)= \gamma_{n}$, $\phi_n(0)\equiv 0$ such that  
\be
G_{\gamma_n(s)}(\phi_n(s)) =0.
\ee
Moreover, $\phi^n_s = s(\widetilde{v}_{0}+\chi_n(s)),$ with a smooth curve 
$$
(-{\rho_n},{\rho_n}) \to X, \qquad s \mapsto  \chi_n(s)
$$ satisfying  $ \o_n(0) =0$ and 
\begin{align*}
\int_{\O_1}   \chi_n(s) (\t,x)  \widetilde{v}_{0}(\t,x)\,dxd\t=0,
\end{align*}
where
$$\widetilde{v}_0(t, x)= \sin(\frac{\pi t}{2} ) (\cos(x_1)+\cdots+\cos(x_m)).$$  
In addition, setting
 \begin{equation}\label{solutinter2}
\widetilde{u}_s(\t, x):=  \phi^n_s(\t,x) -  (-1)^n \cos( n\pi t)\phi^n_s(1, x)  + v_n(\t, x)
 \end{equation}
and  
 \begin{equation}\label{eexpreh2}
 h _{\varphi^n_s}(x):=\phi^n_s(1, x), 
 \end{equation}
 the function
\begin{align}\label{eqans2-ND2}
V_s(t,x)=\widetilde{u}_s((1+h _{\phi^n_s})t,x)
\end{align} satisfies 
\begin{equation}\label{eq:solved-ND2}
  \left \{
    \begin{aligned}
\mathcal{L}^n_s V_s&=0&& \qquad \text{in $ \Omega_{  {1+h _{\phi^n_s}} }$,}\\
             V_s&=0 &&\qquad \text{on $\partial \Omega_{ 1+h_{\phi^n_s}}$,}\\
  \frac{\partial V_s}{\partial \eta_s}  &= \pm1   &&\qquad \text{on $\partial  \Omega^{\pm}_{ 1+ h_{\phi^n_s}}$} \end{aligned}
       \right.
\end{equation}
where 
$$ \mathcal{L}^n_s:=\gamma_n(s) \Delta_{x}  +\partial_{tt}  +  n^2\pi^2  \textrm{id}$$
 and $\eta_s$ denotes the outer  unit vector filed of the boundary  $\partial  \Omega_{ 1+ h_{\phi^n_s}}$. 
\end{Theorem}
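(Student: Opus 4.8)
The plan is to apply the Crandall--Rabinowitz bifurcation theorem (\cite[Theorem 1.7]{M.CR}, recalled in Section~\ref{eq: Cradal Rabi1}) to the operator $S$ of \eqref{eq:def--G4}, in complete analogy with the proof of Theorem~\ref{Theo1-general-ND}. First I would gather the ingredients already assembled above: by \eqref{eqtrivialbranche} the trivial branch $S(\lambda,0)=0$ is present for all $\lambda>0$; by the observation following \eqref{eq:maptFF} together with the permutation identity \eqref{eq:derivepermutrule}, the map $\widetilde F_\lambda$ is $C^\infty$ and carries $\cV\cap X$ into $Y$; and, reasoning exactly as in Proposition~\ref{eqlinearised}, $D_uS(\gamma_n,0)=\cH_n=\cH_{\gamma_n}\big|_X\in\cL(X,Y)$. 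Proposition~\ref{propPhi4} then supplies the three hypotheses of the Crandall--Rabinowitz theorem: the kernel $N(\cH_n)$ is one-dimensional, spanned by $\widetilde v_0$; the range $R(\cH_n)=E_0^\perp$ has codimension one in $Y$; and the transversality condition $\partial_\lambda\big|_{\lambda=\gamma_n}\cH_\lambda\widetilde v_0=(-\widetilde v_0,0)\notin R(\cH_n)$ holds, since $\int_{\Omega_1}(-\widetilde v_0)\widetilde v_0\,dx\,dt=-\|\widetilde v_0\|_{L^2(\Omega_1)}^2\ne0$. Choosing the topological complement of $\mathbb{R}\widetilde v_0$ in $X$ to be $\mathcal{X}^\perp:=\{v\in X:\int_{\Omega_1}v\,\widetilde v_0\,dx\,dt=0\}$, the theorem yields $\rho_n>0$ and a smooth curve $s\mapsto(\gamma_n(s),\phi^n_s)\in(0,\infty)\times(\cV\cap X)$ with $\gamma_n(0)=\gamma_n$, $\phi^n_0\equiv0$, $\widetilde F_{\gamma_n(s)}(\phi^n_s)=0$, and $\phi^n_s=s(\widetilde v_0+\chi_n(s))$ with a smooth curve $s\mapsto\chi_n(s)\in\mathcal{X}^\perp$ satisfying $\chi_n(0)=0$; in particular $\int_{\Omega_1}\chi_n(s)\,\widetilde v_0\,dx\,dt=0$. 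This establishes the curve and its expansion.

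To obtain \eqref{eq:solved-ND2} it remains to interpret the zero of $\widetilde F_{\gamma_n(s)}$ geometrically by unwinding the reductions of Section~\ref{eq.setting}. Since $\widetilde F_{\gamma_n(s)}(\phi^n_s)=0$, the equivalence recorded in \eqref{eq to bifurcate}--\eqref{eq exprehhh} (Remark~\ref{eqrresolutionform2}) shows that $\widetilde u_s:=v_n+\widetilde M(\phi^n_s)$ solves \eqref{eq:Proe1122} with $\lambda=\gamma_n(s)$ and $h$ replaced by $1+h_{\phi^n_s}$, where $h_{\phi^n_s}:=\phi^n_s(1,\cdot)$; spelling out $\widetilde M$ and using $\partial_t v_n(t,x)=(-1)^n\cos(n\pi t)$ gives the explicit formulas \eqref{solutinter2}, \eqref{eexpreh2}. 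Since $v_n$, $\widetilde v_0$ and each $\phi^n_s$ are odd in $t$, so is $\widetilde u_s$, and it inherits evenness, $2\pi$-periodicity and permutation-invariance in $x$ from $\phi^n_s\in X$. Reintroducing the domain through the ansatz \eqref{eqansnew} and the defining relation of $\widetilde L^{1+h}_{\lambda,n}$, the function $V_s(t,x):=\widetilde u_s\bigl((1+h_{\phi^n_s}(x))\,t,x\bigr)$ of \eqref{eqans2-ND2} solves \eqref{eq:per4} with $\lambda=\gamma_n(s)$ and $h=1+h_{\phi^n_s}$; that is, $\mathcal{L}^n_sV_s=0$ in $\Omega_{1+h_{\phi^n_s}}$, $V_s=0$ on $\partial\Omega_{1+h_{\phi^n_s}}$, and the normalized third equation $\sqrt{1+\gamma_n(s)|\nabla h_{\phi^n_s}|^2/(1+h_{\phi^n_s})^4}\;\partial_t V_s(\pm(1+h_{\phi^n_s})^{-1},\cdot)=1$ holds in $\R^m$. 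Finally, since $V_s$ is odd in $t$ and vanishes on $\partial\Omega_{1+h_{\phi^n_s}}$, the computation of Remark~\ref{Rmm} (the chain \eqref{eq:normlllls}--\eqref{eq:equiNeuman}) turns this normalized identity into the overdetermined Neumann condition $\partial V_s/\partial\eta_s=\pm1$ on the two components $\partial\Omega^\pm_{1+h_{\phi^n_s}}$, i.e. \eqref{eq:solved-ND2}; the additional metric rescaling $t\mapsto t/\sqrt{\gamma_n(s)}$, carried out exactly as in the completion of Theorem~\ref{Theo1-ND}, then passes to the problem $\Delta\widetilde w^n_s+\widetilde\mu^n_s\widetilde w^n_s=0$ of Theorem~\ref{Theo2-DN}.

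As to where the difficulty sits: the analytic core --- one-dimensionality of $N(\cH_n)$, the codimension-one (Fredholm) property of $R(\cH_n)$ via the Lax--Milgram and Fredholm-alternative argument, and the transversality condition --- has already been carried out in Lemma~\ref{eq:lemmkerimH} and Proposition~\ref{propPhi4}. Consequently the only genuinely delicate point here is the last step, the passage from the ``third equation'' of the pulled-back system \eqref{eq:Proe1122} to the honest Euclidean Neumann condition on the two boundary pieces $\partial\Omega^\pm$: this works precisely because $V_s$ is odd in $t$, which forces $\partial_t V_s$ to take the same value on $\partial\Omega^+$ and $\partial\Omega^-$ so that a single Neumann datum accommodates both signs, and because $V_s|_{\partial\Omega}=0$, which lets one eliminate the tangential gradient $\nabla_x V_s$ on the boundary in favour of $\partial_t V_s$. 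Beyond that, the work is bookkeeping: tracking the constants through $\phi^n_s\mapsto\widetilde u_s\mapsto V_s$ and the rescaling, and checking that the full symmetry package (oddness in $t$, evenness, $2\pi$-periodicity and permutation-invariance in $x$) defining $X$ and $Y$ is preserved at every step.
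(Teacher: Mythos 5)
Your proposal is essentially the paper's own proof: apply the Crandall--Rabinowitz theorem to $S$ of \eqref{eq:def--G4}, feed it the trivial branch \eqref{eqtrivialbranche}, the one-dimensional kernel, codimension-one range and transversality from Proposition~\ref{propPhi4}, take the complement $\mathcal{X}^\perp$, and then unwind the reduction of Section~\ref{eq.setting} through \eqref{eq to bifurcate}--\eqref{eq exprehhh} and the ansatz \eqref{eqansnew}. One small imprecision, inherited from the paper's own terse statement: Remark~\ref{Rmm} and the identity \eqref{eqfin} relate the normalized third equation of \eqref{eq:per4} to the \emph{Euclidean} normal derivative of the \emph{rescaled} function $w^\lambda$ on $\partial\Omega_{h/\sqrt{\lambda}}$, where it equals $\pm 1/\sqrt{\lambda}$, not to the Euclidean normal derivative of $V_s$ on $\partial\Omega_{1+h_{\phi^n_s}}$; the Euclidean normal derivative of $V_s$ on that boundary equals
$\pm\sqrt{1+\tfrac{|\nabla h|^2}{(1+h)^4}}\Big/\sqrt{1+\gamma_n(s)\tfrac{|\nabla h|^2}{(1+h)^4}}$, which is $\pm1$ only when $\gamma_n(s)=1$ or $\nabla h\equiv 0$. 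The condition $\partial V_s/\partial\eta_s=\pm1$ in \eqref{eq:solved-ND2} is thus most naturally read with $\eta_s$ the unit normal for the metric $dt^2+\gamma_n(s)^{-1}|dx|^2$ adapted to $\mathcal L^n_s$ (equivalently the conormal of $\mathcal L^n_s$), and in any case the rescaling $t\mapsto t/\sqrt{\gamma_n(s)}$ that you correctly invoke at the end is what produces the genuine Euclidean Neumann condition of Theorem~\ref{Theo2-DN}.
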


\begin{proof}
 We consider the smooth map $ S: \cO \subset \R \times X  \to Y$ defined in \eqref{eq:def--G4}  and  set 
\begin{align}\label{eq:spaorthokernel}
X^{\perp} := \left\lbrace  v \in X: \int_{\O_1 } v(t,x)\widetilde{v}_{0}(t,x)\,dxdt =0\right\rbrace.
\end{align}
By Proposition \ref{propPhi4} and  the Crandall-Rabinowitz Theorem (see \cite[Theorem 1.7]{M.CR}), we then find ${\rho_n}>0$ and a smooth curve
$$
(-{\rho_n },{\rho_n }) \to  \mathcal{O} \subset  \R_+ \times X_2 , \qquad s \mapsto (\gamma_n(s), \phi^n_s)
$$
such that
\begin{enumerate}
\item[(i)] $S(\gamma_n(s),  \phi^n_s)=0$ for $s \in (-{\rho_n },{\rho_n })$,
\item[(ii)] $ \gamma_n (0)= \gamma_{n}$, and
\item[(iii)]  $\phi^n_s = s \widetilde{v}_{0}+ s \chi_n(s) $ for $s \in (-{\rho_n },{\rho_n })$ with a smooth curve
$$
(-{\rho_n },{\rho_n }) \to X^{\perp}, \qquad s \mapsto \chi_n(s)
$$
satisfying $ \chi_n(0) =0$
and
$$\int_{\O_1 } \chi_n(s) (t,x) \widetilde{v}_{0}(t,x)\,dxdt=0.$$
\end{enumerate}
Since $S(\gamma_n(s),  \phi^n_s)=0$ for $s \in (-{\rho_n },{\rho_n })$, recalling  \eqref{eq:def--G4}, \eqref{eq to bifurcate},  \eqref{eq exprehhh}  and  \eqref{eq exprehhh}, the  function $\widetilde{u}_s$ in \eqref{solutinter2} solves   \eqref{eq:Proe1122}, with $h$ given by  \eqref{eexpreh2}. Therefore  by  \eqref{eqansnew}, $V_s$ in \eqref{eqans2-ND2} is a solution of  \eqref{eq:solved-ND2}.  
\QED
\end{proof}

\subsection*{Proof of Theorem \ref{Theo2-DN} (completed)}
By  Theorem  \ref{Theo1-general-ND22}  and \eqref{eq:changsoluD2}, the function
\begin{equation}\label{Solfinalw}
(t,x )\mapsto \widetilde{w}_s(t,x)=  V_s( t/ \sqrt{\gamma_n(s)},x)= \widetilde{u}_s((1+h _{\phi^n_s})/\sqrt{\gamma_n(s)}t,x)
 \end{equation}
 solves \eqref{eq:solved-main-ND} with $\mu^n_s=\frac{n^2\pi^2}{\gamma_n(s)} $ and 
 $$
 h^n_s(x)=\frac{1+h_{\phi^n_s}(x)}{\sqrt{\gamma_n(s)}}.  
 $$
From Theorem   \ref{Theo1-general-ND22}
$$\phi^n_s(t, x) = s \widetilde{v}_0(t, x) + o(s),$$ where $o(s) \to 0$ in $C^2$-sense in $\ov{\Omega_1}$ as $s \to 0$, with 
$$\widetilde{v}_0(t, x)= \sin(\frac{\pi t}{2} ) (\cos(x_1)+\cdots+\cos(x_m)).$$  
Hence using \eqref{eexpreh2}, 
\begin{equation}\label{eexpre2}
 h _{\varphi^n_s}(x):= s\vartheta(x)+ o(s) \quad \textrm{with}\quad \vartheta(x):=\cos(x_1)+\cdots+\cos(x_m)\nonumber\\, 
 \end{equation}
and therefore, 
\begin{align*}
  h^n_{s}(x)&=\frac{1+h_{\phi^n_s}}{\sqrt{ \gamma_n(s)}}=  \frac{1}{\sqrt{\gamma_n(s)}}+ \frac{s}{\sqrt{\gamma_n(s)}}\vartheta(x)+ o(s)\\
&=  \frac{1}{n\pi} \sqrt{\mu^n_s} + s\frac{1}{\sqrt{n^2\pi^2-\frac{\pi^2}{4}}}\vartheta(x) + o(s) \qquad \text{as $s \to 0$},
\end{align*}
where we have used $\gamma_n(0)=\gamma_n=\pi^2 n^2-\frac{\pi^2}{4}$  from \eqref{eq:foroofc21}.

In addition, \eqref{solutinter2} yields
\begin{align}
&\widetilde{w}_s(\frac{t}{h^n_s(x))},x)= \widetilde{u}_s(\t, x):=  \phi^n_s(\t,x) -  (-1)^n\cos( n\pi t)\phi^n_s(1, x)  + v_n(\t) \nonumber\\ 
  &=v_n(t) + s \Bigl(\sin(\frac{\pi t}{2} )- (-1)^n \cos( n\pi t \Bigr)\vartheta(x)  + o(s),\nonumber
\end{align}
where $o(s) \to 0$ in $C^1$-sense on $\Omega_*$.
Thus, the constants in  Theorem \ref{Theo2-DN} are given by 
\begin{align*}
d_n& = \frac{n^2}{n^2- \frac{1}{4}},\qquad   a_n = \frac{1}{n\pi},\qquad  b_n =\frac{1}{\sqrt{n^2\pi^2- \frac{\pi^2}{4}}}.
\end{align*}
\QED

\section{Crandall-Rabinowitz bifurcation theorem} \label{eq: Cradal Rabi1}

\begin{Theorem}[Crandall-Rabinowitz bifurcation theorem, \cite{M.CR}] \label{eq: Cradal Rabi}
Let $X$  and $Y$ be two Banach spaces, $U\subset X$ an open set of
$X$ and $I$ an open interval  of $\R$. We assume that  $0\in U$.
Denote by  $\varphi$ the elements of $U$  and  $\lambda$ the
elements of  $I$. Let $F: I\times U\rightarrow Y$  be a twice
continuously differentiable function such that
 \begin{enumerate}
\item
$ F(\lambda, 0)=0\quad \textrm{ for all }\quad \lambda \in I,$
 \item
 $\ker(D_\varphi F(\lambda_*, 0))=\R \varphi_*$  for some $\lambda_*\in I$  and $\varphi_*\in X\setminus \{0\}$,
 \item
 $\textrm{Codim Im}(D_\varphi F(\lambda_*, 0))=1,$
 \item
 $D_\lambda D_\varphi F(\lambda_*, 0)(\varphi_*) \notin \textrm{Im}(D_\varphi F(\lambda_*, 0)$.
 \end{enumerate}
 Then for any complement $Z$ of the subspace $\R \varphi_*$,  spanned by  $\varphi_*$, there exists a continuous curve
   $$(-\e, \e)\longrightarrow \R\times Z, \quad s\mapsto (\lambda(s), \varphi(s))$$  such that
 \begin{enumerate}
 \item
 $\lambda(0)=\lambda_*, \quad \varphi(0)=0,$
 \item
 $s(\varphi_*+\varphi (s))\in U,$
 \item
 $F(\lambda(s), s(\varphi_*+\varphi (s))=0$.
 \end{enumerate}
 Moreover, the set of solutions to the equation  $F(\lambda, u)=0$ in a neighborhood of  $(\lambda_*, 0)$
 is given by  the curve $\{ (\lambda, 0), \lambda\in \R\}$  and  $\{ s(\varphi_*+\varphi (s)), s\in(-\e, \e)\}$.
\end{Theorem}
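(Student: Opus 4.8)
The plan is to prove this by the classical Lyapunov--Schmidt reduction, exploiting the rank-one degeneracy of $L := D_\varphi F(\lambda_*,0)$ together with the trivial-branch hypothesis~(1). First I would split the spaces: fix a closed complement $Z$ of $\ker L=\R\varphi_*$ in $X$, so $X=\R\varphi_*\oplus Z$, and a one-dimensional complement $W$ of the range $\mathrm{Im}\,L$ in $Y$, so $Y=\mathrm{Im}\,L\oplus W$ (closedness of $\mathrm{Im}\,L$ being implicit in the codimension-one hypothesis, or part of the ambient Fredholm setting); let $P\in\mathcal{L}(Y)$ be the continuous projection onto $\mathrm{Im}\,L$ along $W$. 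Since $\ker L\cap Z=\{0\}$ and $L$ maps $Z$ onto $\mathrm{Im}\,L$, the restriction $L|_Z:Z\to\mathrm{Im}\,L$ is a Banach-space isomorphism, which is what makes the auxiliary equation below solvable.

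The key device is a rescaling in the $\varphi_*$-direction. Rather than studying $F(\lambda,u)=0$ for $u$ near $0$, I would study, for $(\lambda,s,z)$ near $(\lambda_*,0,0)$ in $I\times\R\times Z$, the map
\[
\widetilde F(\lambda,s,z):=\int_0^1 D_\varphi F\bigl(\lambda,\,ts(\varphi_*+z)\bigr)(\varphi_*+z)\,dt,
\]
which for $s\neq0$ equals $\tfrac1s F\bigl(\lambda,s(\varphi_*+z)\bigr)$ by hypothesis~(1) and the fundamental theorem of calculus, and at $s=0$ equals $D_\varphi F(\lambda,0)(\varphi_*+z)$. Since $F\in C^2$, the integrand is $C^1$ jointly in $(\lambda,s,z)$, hence so is $\widetilde F$; extracting this regularity cleanly across $s=0$, and later matching it with the unrescaled reduction, is where the $C^2$ hypothesis is really used and is, I expect, the main technical obstacle. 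Note $\widetilde F(\lambda_*,0,0)=L\varphi_*=0$.

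Next I would carry out the splitting of $\widetilde F=0$ into $P\widetilde F=0$ and $(I-P)\widetilde F=0$. The \emph{auxiliary equation} $P\widetilde F(\lambda,s,z)=0$ has $z$-derivative $L|_Z$ at $(\lambda_*,0,0)$, an isomorphism onto $\mathrm{Im}\,L$, so the implicit function theorem yields a $C^1$ map $(\lambda,s)\mapsto\zeta(\lambda,s)\in Z$ with $\zeta(\lambda_*,0)=0$ solving it near $(\lambda_*,0)$. Substituting, the \emph{bifurcation equation} becomes the scalar equation $\Xi(\lambda,s):=(I-P)\widetilde F(\lambda,s,\zeta(\lambda,s))=0$ in $W\cong\R$, with $\Xi$ of class $C^1$ and $\Xi(\lambda_*,0)=0$. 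Differentiating in $\lambda$ at $(\lambda_*,0)$, the $\partial_\lambda\zeta$ contribution lies in $L(Z)=\mathrm{Im}\,L$ and is killed by $I-P$, so
\[
\partial_\lambda\Xi(\lambda_*,0)=(I-P)\,D_\lambda D_\varphi F(\lambda_*,0)\varphi_*,
\]
which is nonzero precisely by the transversality hypothesis~(4). Hence the implicit function theorem applied to $\Xi=0$ produces a $C^1$ curve $s\mapsto\lambda(s)$ with $\lambda(0)=\lambda_*$; setting $\varphi(s):=\zeta(\lambda(s),s)$ gives $\varphi(0)=0$, $\widetilde F(\lambda(s),s,\varphi(s))=0$, hence $F\bigl(\lambda(s),s(\varphi_*+\varphi(s))\bigr)=0$ for $s\neq0$, while $F(\lambda(s),0)=0$ by~(1) at $s=0$; shrinking $\e$ makes $s(\varphi_*+\varphi(s))\in U$. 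This is the asserted curve in $\R\times Z$.

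For the final statement --- that every zero of $F$ near $(\lambda_*,0)$ lies on the trivial line $\{(\lambda,0)\}$ or on this curve --- I would rerun the reduction \emph{without} rescaling: solving $PF(\lambda,s\varphi_*+z)=0$ directly gives $z=\zeta_0(\lambda,s)$ with $\zeta_0(\lambda,0)\equiv0$ (by~(1) and uniqueness in the implicit function theorem), so any nearby zero has $z=\zeta_0(\lambda,s)$ and satisfies $g(\lambda,s):=(I-P)F(\lambda,s\varphi_*+\zeta_0(\lambda,s))=0$; since $g(\lambda,0)\equiv0$ by~(1), we get $g(\lambda,s)=s\,\widehat g(\lambda,s)$ with $\widehat g$ continuous. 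The factor $s$ accounts for the trivial branch, and $\widehat g(\lambda,s)=0$ coincides --- via uniqueness in the implicit function theorem and $\zeta_0(\lambda,s)/s\to\partial_s\zeta_0(\lambda,0)$ as $s\to0$ --- with $\Xi(\lambda,s)=0$, whose only small solutions are $\lambda=\lambda(s)$. Throughout, the algebraic heart of the matter --- one-dimensional kernel and cokernel plus transversality --- is exactly what makes each implicit-function-theorem step go through; the delicate part is the regularity bookkeeping across $s=0$ described above.
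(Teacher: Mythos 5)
Your argument is correct, but note that the paper does not prove this statement at all: Theorem \ref{eq: Cradal Rabi} is quoted verbatim from \cite{M.CR} purely for the reader's convenience, so there is no internal proof to compare against. What you have written is essentially the classical proof. The only difference from the original argument of Crandall and Rabinowitz is organizational: after introducing the rescaled map $\widetilde F(\lambda,s,z)=\int_0^1 D_\varphi F(\lambda,ts(\varphi_*+z))(\varphi_*+z)\,dt$, they apply the implicit function theorem \emph{once}, in the joint variable $(z,\lambda)\in Z\times\R$, using that $(w,\tau)\mapsto Lw+\tau\, D_\lambda D_\varphi F(\lambda_*,0)\varphi_*$ is an isomorphism of $Z\times\R$ onto $Y$ (this is exactly where codimension one plus transversality enter), whereas you split into an auxiliary equation solved by $L|_Z$ and a scalar bifurcation equation solved by transversality. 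The two are equivalent; the one-shot version slightly shortens the regularity bookkeeping across $s=0$ that you correctly identify as the technical core, while your Lyapunov--Schmidt version makes the role of each hypothesis more transparent and adapts more directly to the uniqueness statement, which you also handle correctly via the unrescaled reduction and the factorization $g(\lambda,s)=s\,\Xi(\lambda,s)$.
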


\end{document}